\numberwithin{equation}{section}
\title{Peripheral Poisson boundaries and jointly bi-harmonic functions}
\author{Sayan Das}
\date{}
\newtheorem{thm}{Theorem}[section]
\newtheorem{thma}{Theorem}
\newtheorem{prop}[thm]{Proposition}
\newtheorem{cor}[thm]{Corollary}
\newtheorem{lem}[thm]{Lemma}
\newtheorem{examp}[thm]{Example}
\newcommand{\BB}{\operatorname{\mathcal B}} 
\newcommand{\G}{{\Gamma}} 
\newcommand{\g}{{\gamma}} 
\newcommand{\HH}{{\mathcal H}} 
\newcommand{\mcp}{{\mathcal P}} 
\newcommand{\email}{Email: } 
\newcommand{\Har}{\operatorname{Har}}
\begin{document}

%
%

\maketitle
\begin{abstract}
	In this paper we answer a question of Kaimanovich by characterizing (jointly) bi-harmonic functions on countable, discrete groups with respect to a symmetric, generating measure. We also study the peripheral Poisson boundary of $L(\G)$ with respect to Markov operators arising from symmetric, generating probability measures on a countable, discrete group $\G$. We solve a recent conjecture of Bhat, Talwar and Kar regarding peripheral eigenvalues and their corresponding eigenvectors for such Markov operators, and provide a complete description of the peripheral Poisson boundary in the aforementioned scenario. 
\end{abstract}
\section{Introduction}
Let $\G$ be a countable, discrete group, and let $\mu$ be a probability measure on $\G$. In early 60's, Furstenberg found a natural way to associate a $\G$-measure space $(\BB, \beta)$ that captures the asymptotic properties of the (right)-random walk on $\G$ with law $\mu$ \cite{ furstenbergproducts, furstenberg}. This measure space is called the Poisson boundary of $\G$ with respect to the measure $\mu$. The associated Markov operator $\mcp_{\mu}: \ell^{\infty}(\G) \rightarrow \ell^{\infty}(\G)$ is defined by  
\[
\mcp_{\mu}(f)(g)= (f \ast \mu)(g)= \sum\limits_{h \in \G} \mu(h)f(gh), \text { where } f \in \ell^{\infty}(\G), \text{ and } g \in \G.
\]
The space of bounded harmonic functions is defined as the fixed points of the Markov operator $\mcp_{\mu}$, and is denoted by $\Har(\mu)$. There is a one-to-one correspondence between bounded harmonic functions and (essentially) bounded measurable functions on the associated Poisson boundary$(\BB, \beta)$. While the space of bounded harmonic functions is not closed under pointwise product in general, one may define a new product of bounded harmonic functions that turns $\Har(\mu)$ into an an abelian von Neumann algebra as follows. Let $f_1, f_2 \in \Har(\mu)$. Then $f_1 \diamond f_2 = \lim \limits_{n \rightarrow \infty}(f_1 f_2)\ast \mu^{\ast n}$, where the limit exists pointwise. Equipped with this new multiplication, we have the isomorphism of von Neumann algebras $\Har(\mu) \cong L^{\infty}(\BB, \beta)$. \vskip 0.01 in
The Poisson boundary enjoys remarkable rigidity properties as a $\G$-space and has become a major object of study for establishing rigidity properties of groups and their probability measure preserving actions \cite{margulissuperrigidity, zimmersuperrigidity, badershalom, burgermonod, baderfurman}.
Similarly, one may consider the left-random walk on $\G$, with the corresponding Markov operator given by
\[
\mcp_{\mu}^o(f)(g)= (\mu \ast f)(g)= \sum\limits_{h \in \G} \mu(h)f(hg), \text { where } f \in \ell^{\infty}(\G), \text{ and } g \in \G.
\]
Clearly one gets analogous theories whether one considers right, or left random walks on $\G$. 
\vskip 0.01 in
Despite the plethora of results concerning $\mu$-harmonic functions, the study of bi-harmonic functions was only sparsely considered, as noted by Kaimanovich in \cite[Section II]{kaimanovich1}. For a symmetric, generating probability measure $\mu$, separately $\mu$-bi-harmonic functions, i.e. functions $f \in \ell^{\infty}(\G)$ with $f \ast \mu= \mu \ast f= f$ were studied by Raugi \cite{raugi} and Willis \cite{willis}. In particular, Willis showed that these functions are constant. The fact that bounded separately $\mu$-bi-harmonic functions are constant is also equivalent to Kaimanovich's seminal double ergodicity theorem.
\vskip 0.01 in
  Motivated by this, Kaimanovich \cite{kaimanovich1} considered the case of bounded jointly $\mu$-bi-harmonic functions, i.e., functions $f \in \ell^{\infty}(\G)$ satisfying $\mu \ast f \ast \mu =f$. To the best of our knowledge, this is the only paper in the literature dealing with jointly $\mu$-bi-harmonic functions. Kaimanovich proved \cite[Theorems 3 and 4]{kaimanovich1} that bounded jointly $\mu$-bi-harmonic functions must be separately $\mu$-bi-harmonic under left and right convolution, and hence constant. However, Kaimanovich's proof has a gap. As we show in Example~\ref{eg: anti-harmonic for Z} and Example~\ref{eg: anti-harmonic for F_2}, there exist jointly $\mu$-bi-harmonic functions that are not constant! Roughly speaking, these jointly $\mu$-bi-harmonic functions are (separately) anti $\mu$-harmonic under left and right convolutions. 
Our first main theorem characterizes jointly $\mu$-bi-harmonic functions by showing that they are separately anti $\mu$-harmonic, upto a constant. Hence, this result completely answers Kaimanovich's question regarding jointly $\mu$-bi-harmonic functions that he considered in \cite{kaimanovich1}.
\begin{thma}
	Let $\mu$ be a symmetric, generating probability measure on $\G$. Let $f \in \ell^{\infty}(\G)$ be a jointly $\mu$-bi-harmonic function, i.e. $\mu \ast f \ast \mu=f$. Then there exists $c \in \mathbb C$ such that $f-c$ is (separately) anti-harmonic under both left and right convolution by $\mu$.
\end{thma}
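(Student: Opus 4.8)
The plan is to translate the problem into operator theory, where the only substantial input is a dichotomy for modulus-one approximate eigenvalues, and then to feed that into soft spectral theory together with the already-established fact (Willis) that separately $\mu$-bi-harmonic bounded functions are constant. Write $\mcp = \mcp_\mu$ for right convolution and $\mcp^o := \mcp^o_\mu$ for left convolution by $\mu$; both are commuting contractions of $\ell^\infty(\G)$ (by associativity of convolution), and $\mcp$ is weak-$*$ continuous, being the adjoint of a bounded convolution operator on $\ell^1(\G)$. Put
\[
V := \{\, f \in \ell^\infty(\G) : \mu\ast f\ast\mu = f \,\} = \{\, f : \mcp^o\mcp f = f \,\},
\]
a weak-$*$ closed subspace. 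First I would record that $V$ is invariant under $\mcp$ and $\mcp^o$ — if $\mu\ast f\ast\mu = f$ then $\mu\ast(f\ast\mu)\ast\mu = (\mu\ast f\ast\mu)\ast\mu = f\ast\mu$, and symmetrically — and that on $V$ one has $\mcp^o\mcp = \mcp\mcp^o = \mathrm{id}_V$, by the defining relation together with commutativity. Hence $\mcp|_V$ is invertible with inverse $\mcp^o|_V$; since both are contractions they are surjective isometries of $(V,\|\cdot\|_\infty)$, so in particular $\sigma(\mcp|_V)\subseteq S^1$.

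The crux is the following claim: \emph{if $\lambda$ lies in the approximate point spectrum of $\mcp$ acting on $\ell^\infty(\G)$ and $|\lambda|=1$, then $\lambda=\pm 1$.} I would prove this by a rigidity argument. Choose $f_n\in\ell^\infty(\G)$ with $\|f_n\|_\infty=1$ and $\|f_n\ast\mu-\lambda f_n\|_\infty\to 0$, pick $g_n$ with $|f_n(g_n)|\to 1$, and replace $f_n$ by its left translate $x\mapsto f_n(g_n x)$, which obeys the same estimates and now satisfies $|f_n(e)|\to 1$. Passing to a weak-$*$ limit along a subsequence (the ball of $\ell^\infty(\G)=\ell^1(\G)^*$ is weak-$*$ compact and metrizable, as $\G$ is countable) and using weak-$*$ continuity of $\mcp$, one obtains $f_\infty\in\ell^\infty(\G)$ with $\|f_\infty\|_\infty=|f_\infty(e)|=1$ and $f_\infty\ast\mu=\lambda f_\infty$. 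Since $|f_\infty|$ attains its maximum at $e$, equality throughout $1=|f_\infty(e)|=\bigl|\sum_h\mu(h)f_\infty(h)\bigr|\le\sum_h\mu(h)|f_\infty(h)|\le 1$ forces $f_\infty(h)=\lambda f_\infty(e)$ for every $h$ in the support $S$ of $\mu$; running the same argument at each point of $S$ (where $|f_\infty|$ again attains its maximum) and iterating yields $f_\infty(h_1\cdots h_n)=\lambda^n f_\infty(e)$ for all $h_1,\dots,h_n\in S$ and all $n\ge 0$. Because $\mu$ is symmetric, $S=S^{-1}$, so for any $h\in S$ the identity $e=hh^{-1}$ is a word of length two in $S$; comparing this with the empty word gives $f_\infty(e)=\lambda^2 f_\infty(e)$, hence $\lambda^2=1$ since $f_\infty(e)\ne 0$. (Symmetry, and the generating hypothesis — which makes $S$ generate $\G$, so that the values $f_\infty(h_1\cdots h_n)$ really exhaust $\G$ — are the only places the hypotheses are used.)

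With the claim in hand I would finish as follows. The set $\sigma(\mcp|_V)\subseteq S^1$ has empty interior, hence equals its topological boundary, and boundary points of a spectrum lie in the approximate point spectrum; so $\sigma(\mcp|_V)\subseteq\{1,-1\}$ by the claim. By the spectral mapping theorem $\sigma\bigl((\mcp|_V)^2\bigr)=\{1\}$, while $(\mcp|_V)^2$ is an invertible isometry; by Gelfand's theorem an invertible isometry with spectrum $\{1\}$ is the identity, so $(\mcp|_V)^2=\mathrm{id}_V$, i.e.\ $f\ast\mu\ast\mu=f$ for all $f\in V$. In particular $\mcp^o|_V=(\mcp|_V)^{-1}=\mcp|_V$. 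Now decompose
\[
f=\tfrac12\bigl(f+f\ast\mu\bigr)+\tfrac12\bigl(f-f\ast\mu\bigr).
\]
The first summand lies in $V$ and is fixed by $\mcp$, hence also by $\mcp^o=\mcp$ on $V$, so it is separately $\mu$-bi-harmonic and therefore equal to a constant $c$ by Willis's theorem. Then $f-c=\tfrac12(f-f\ast\mu)\in V$ satisfies $(f-c)\ast\mu=-(f-c)$ and, again using $\mcp^o|_V=\mcp|_V$, also $\mu\ast(f-c)=-(f-c)$: that is, $f-c$ is anti-harmonic under both left and right convolution by $\mu$, as required.

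I expect the main obstacle to be the modulus-one dichotomy of the second paragraph. Everything else (surjective isometry, spectral mapping, Gelfand's theorem, the appeal to Willis) is routine, but in the dichotomy one must carefully push the triangle-inequality rigidity through the two limiting passages — translating so that the supremum is attained at $e$, then taking the weak-$*$ limit — and correctly exploit symmetry together with the generating property to pin down $\lambda$; this is precisely where the hypotheses of the theorem do their work.
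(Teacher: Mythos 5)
Your proof is correct, but it follows a genuinely different route from the paper's. The paper reduces to $\nu=\mu\ast\mu$ (which has $e$ in its support) and invokes Revuz's lemma on commuting contractions --- the $\exp(-n(I-T_1))$ trick --- to show that any jointly $\nu$-bi-harmonic operator is separately $\nu$-harmonic; this places $M_f$ in $\Har(\mcp_\mu^2)\cap\Har((\mcp_\mu^o)^2)$, after which the eigenspace splitting $\Har(\mcp_\mu^2)=\Har(\mcp_\mu)\oplus E_{-1}(\mcp_\mu)$ and the double ergodicity theorem finish the job. You instead restrict to the weak-$*$ closed space $V$ of jointly bi-harmonic functions, observe that $\mcp_\mu|_V$ is a surjective isometry inverse to $\mcp_\mu^o|_V$, prove that unimodular approximate eigenvalues of $\mcp_\mu$ on $\ell^\infty(\G)$ must be $\pm1$ (via translation, weak-$*$ compactness, and equality in the triangle inequality), and then use $\partial\sigma\subseteq\sigma_{ap}$, spectral mapping, and the Gelfand--Hille theorem to get $(\mcp_\mu|_V)^2=\mathrm{id}_V$. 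Both arguments terminate in the identical decomposition $f=\tfrac12(f+f\ast\mu)+\tfrac12(f-f\ast\mu)$ and the appeal to Willis/double ergodicity. The trade-off: the paper's route works at the level of operators on $\BB(\ell^2(\G))$ and yields the stronger Theorem~\ref{thm:joint bi-harmonic2} about bi-harmonic \emph{operators}, which is reused later; your route is confined to scalar functions (the weak-$*$ compactness of the unit ball of $\ell^\infty$ and the pointwise maximum-modulus rigidity do not transfer verbatim to $\BB(\ell^2(\G))$), but in exchange your dichotomy is a genuine strengthening at the function level --- it handles \emph{approximate} eigenvalues, avoids Foguel's theorem used in the paper's Theorem~\ref{thm: eigenvalue for identity in support case}, and your rigidity argument essentially anticipates the character-extraction technique of Lemma~\ref{lem:character from anti harmonic function} and Theorem~\ref{thm: anti-harmonic character}.
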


This naturally leads to the investigation of anti-harmonic functions on $\G$. Our second main theorem is a characterization of anti-harmonic functions in terms of harmonic functions and anti-harmonic characters. This solves a recent conjecture of Bhat, Talwar and Kar (see the last paragraph of \cite[Example 3.4]{BTK22}).
\begin{thma}
	Let $\mu$ be a symmetric, generating measure on a countable, discrete group $\G$. Assume that there exists a nonzero function $f \in \ell^{\infty}(\G)$ with $f \ast \mu =-f$. Then there exists a multiplicative character $\chi: \G \rightarrow \mathbb T$ such that $\chi|_{\text{supp}(\mu)} \equiv -1$, and hence $\chi \ast \mu = - \chi$. \vskip 0.01 in
	If $F$ is any anti-harmonic function, then we can find a harmonic function $f_1$ such that $F=f_1 \cdot \chi$. 
\end{thma}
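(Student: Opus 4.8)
The plan is to extract a character from an anti-harmonic function using the equality condition in the triangle inequality, much as one does when classifying harmonic functions of constant modulus, but adapted to the eigenvalue $-1$. First I would normalize: given a nonzero $f \in \ell^\infty(\G)$ with $f \ast \mu = -f$, let $M = \|f\|_\infty$ and consider the set $S$ of group elements $g$ where $|f(g)|$ is close to its supremum. From $f(g) = -\sum_h \mu(h) f(gh)$ we get $|f(g)| \le \sum_h \mu(h)|f(gh)|$, and iterating with $\mu^{\ast 2n}$ (which is again a symmetric generating measure, now with $f \ast \mu^{\ast 2n} = f$) shows $|f|$ is $\mu^{\ast 2}$-subharmonic; combined with $\|f\|_\infty < \infty$ and the fact that a bounded subharmonic function for a symmetric generating measure on a group has no nonconstant such behavior in a suitable averaged sense, I would argue $|f|$ is constant. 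Concretely: if $|f|$ attains its sup at some $g_0$, the equality case in $|f(g_0)| = |\sum_h \mu(h) f(g_0 h)| \le \sum_h \mu(h)|f(g_0 h)|$ forces both $|f(g_0 h)| = M$ for every $h \in \operatorname{supp}(\mu)$ and the phases of $f(g_0 h)$ to be aligned. Propagating along words in $\operatorname{supp}(\mu)$ — which generates $\G$ — gives $|f| \equiv M$ everywhere. Rescale so $M = 1$, so $f: \G \to \mathbb T$.

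Next I would produce the character. Fix the phase-alignment data: for $g \in \G$ and $h \in \operatorname{supp}(\mu)$, the equality condition in the triangle inequality says all the unit complex numbers $f(gh)$, $h \in \operatorname{supp}(\mu)$, point in the same direction, and that common direction is $-f(g)$ (since their $\mu$-average equals $-f(g)$ and they are unit vectors all equal to each other). Hence $f(gh) = -f(g)$ for all $h \in \operatorname{supp}(\mu)$ and all $g \in \G$. Define $\chi(g) = f(g)/f(e)$. For $h \in \operatorname{supp}(\mu)$ this gives $\chi(gh) = -\chi(g)$, i.e. $\chi(gh)\chi(g)^{-1} = -1$ depends only on $h$ (it is the constant $-1$) for generators; I would then check that the map $g \mapsto (\text{the relation it imposes})$ is consistent on all of $\G$ by using that $\operatorname{supp}(\mu)$ is symmetric and generating and that $-1$ squared is $1$, so the assignment extends to a well-defined homomorphism $\chi: \G \to \mathbb T$ with $\chi|_{\operatorname{supp}(\mu)} \equiv -1$. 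Then $\chi \ast \mu = \chi \cdot (\sum_h \mu(h)\chi(h)) \cdot (\text{nothing})$ — more precisely $(\chi \ast \mu)(g) = \sum_h \mu(h)\chi(gh) = \chi(g)\sum_h \mu(h)\chi(h) = -\chi(g)$, so $\chi$ is itself anti-harmonic.

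For the last assertion, given an arbitrary anti-harmonic $F$ (so $F \ast \mu = -F$), set $f_1 = F \cdot \bar\chi = F \cdot \chi^{-1}$ (using $|\chi| = 1$, so $\chi^{-1} = \bar\chi$ is a genuine pointwise inverse). Then $(f_1 \ast \mu)(g) = \sum_h \mu(h) F(gh)\overline{\chi(gh)} = \sum_h \mu(h) F(gh)\overline{\chi(g)}\,\overline{\chi(h)} = \overline{\chi(g)}\sum_h \mu(h)F(gh)\cdot(-1)^{-1}$; since $\overline{\chi(h)} = -1$ on $\operatorname{supp}(\mu)$, this equals $-\overline{\chi(g)}(F\ast\mu)(g) = -\overline{\chi(g)}\cdot(-F(g)) = f_1(g)$, so $f_1$ is harmonic, and $F = f_1 \cdot \chi$ as claimed (modulo relabeling $f_1$ versus $\bar f_1$, which I would pin down carefully when writing it out).

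The main obstacle I anticipate is the first step — upgrading the pointwise inequality $|f| \le |f| \ast \mu^{\ast 2}$ to the conclusion that $|f|$ is constant. The subharmonicity alone does not immediately force constancy for infinite groups; the clean route is to avoid a maximum principle entirely and instead run the triangle-inequality equality analysis at a point where $|f|$ is within $\varepsilon$ of the sup, track how the near-equality forces $|f(gh)|$ to be within $O(\varepsilon/\mu(h))$ of the sup for each $h \in \operatorname{supp}(\mu)$, and then use a clean compactness/martingale argument (or pass to a weak-$*$ limit point of translates of $f$ realizing the sup exactly) to reduce to the exact-equality case. Making this limiting argument rigorous — ensuring the limit function is still anti-harmonic and still nonzero, hence of constant modulus by the exact argument — is the technical heart; everything after that is the bookkeeping of phases and the verification that the resulting assignment on generators extends to a character.
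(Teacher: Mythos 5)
Your proposal is correct, and where it matters it converges on the same mechanism as the paper: translate $f$ to points where $|f|$ is within $\varepsilon$ of its supremum, pass to a pointwise limit of these (uniformly bounded, still anti-harmonic) translates to get an anti-harmonic function attaining the value $1$ at $e$ with sup norm $\leq 1$, and then exploit the equality case of the triangle inequality (strict convexity of the disk) to force the limit to equal $-1$ on $\text{supp}(\mu)$ and to be multiplicative along words in $\text{supp}(\mu)$ --- this is precisely the paper's Theorem~\ref{thm: anti-harmonic character} followed by Lemma~\ref{lem:character from anti harmonic function}. You differ in two places, both in the direction of being more elementary. First, the paper manufactures its ``good'' anti-harmonic function by passing to the minimal dilation $\theta$ of $\mcp_{\mu}$, proving the existence of a self-adjoint unitary $u\in\Har(\theta^2)$ with $\theta(u)=-u$ (Proposition~\ref{prop: self adjoint unitary anti-harmonic}, a nontrivial projection/SAT argument) and deducing an $f$ with $\mcp_{\mu}^n(|f|)\nnearrow 1$ (Proposition~\ref{prop: extreme point of anti harmonic function}); your route starts from an arbitrary nonzero anti-harmonic $f$ normalized to $\|f\|_\infty=1$ and only uses that the supremum is approached, which suffices (anti-harmonicity survives pointwise limits of uniformly bounded sequences by dominated convergence, and the limit is nonzero since it equals $1$ at $e$), so the dilation machinery is bypassed entirely for this theorem. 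Second, for the factorization $F=f_1\cdot\chi$ you verify directly that $F\bar\chi$ is harmonic via the convolution identity and $\chi|_{\text{supp}(\mu)}\equiv-1$, whereas the paper invokes the multiplicative domain of the ucp map (Proposition~\ref{prop: anti harm description unitary case}); the two are equivalent here and yours is the more pedestrian computation. One caveat: your opening suggestion that $\mu^{\ast 2}$-subharmonicity of $|f|$ should force $|f|$ to be constant is a dead end --- bounded subharmonic functions need not be constant, and indeed the theorem itself produces anti-harmonic functions $f_1\chi$ of nonconstant modulus whenever the Poisson boundary is nontrivial --- but you correctly abandon that in favor of the limiting argument, so the final plan is sound.
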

Due to the remarkable applications of the study of Poisson  boundaries to rigidity phenomenon in the group case, A. Connes suggested that one should try to understand the notion of noncommutative Poisson boundaries of $\rm II_1$ factors (see \cite[Page 86]{jonesproblems}). A major step in this direction was undertaken by Izumi in early 2000's \cite{izumi2, izumi3, izumi4}. Izumi defined the Poisson boundary corresponding to normal, unital, completely positive (ucp) maps acting on von Neumann algebras by showing that the fixed point space of such maps is a weakly closed injective operator system, and hence can be equipped with the Choi-Effros product \cite{choieffros} to endow it with a von Neumann algebra structure. Later Prunaru \cite{prunaru} and Izumi \cite[Appendix]{izumi3} (the observation is credited to Bill Arveson) independently observed that this von Neumann algebra coincides with the fixed points of the minimal dilation of the ucp map under consideration (see also \cite[Appendix]{DP22}). 
\vskip 0.01 in
Noncommutative Poisson boundaries (for $L(\G)$) played a key role in Peterson's seminal work on character rigidity \cite{petersonchar} (see also \cite{creutzpetersonchar, creutzpetersonfree}). Motivated by the aforementioned discussion, Prof.\ Jesse Peterson and the author initiated the study of Poisson boundaries of a finite von Neumann algebra $M$. Given an $M$-bimodular normal ucp map $\mcp_{\varphi}: \BB(L^2(M)) \rightarrow \BB(L^2(M))$, we considered the corresponding boundary of the ucp map in Izumi's sense. The main difference between our work and Izumi's is that Izumi's work should be considered as a noncommutative generalization of Poisson boundaries of general Markov operators, while we were motivated to find a notion of Poisson boundary that fits closer to the group case. This distinction of perspectives will also show up in the current work.
\vskip 0.01 in
Motivated by Izumi's perspective Bhat, Talwar and Kar \cite{BTK22} recently considered the notion of Peripheral Poisson boundary. Given a normal ucp map $\Phi: M \rightarrow M$, where $M$ is a von Neumann algebra, a unimodular complex number $\lambda \in \mathbb T$ is called a \emph{peripheral eigenvalue} of $\Phi$ if there exists $0 \neq x \in M$ with $\Phi(x)=\lambda x$. Using Bhat's minimal dilation \cite{bhatdilation}, they showed that the norm closed span of all peripheral eigenspaces form a $C^*$-algebra, called the \emph{Peripheral Poisson boundary}, under a new Choi-Effros type product. Remarkably, the Peripheral Poisson boundary need not be a von Neumann algebra in general \cite[Page 14]{BTK22}.
\vskip 0.01 in
Given a probability measure $\mu$ on $\G$ whose support generates $\G$ as a semigroup, one may consider the extension of the Markov operators $\mcp_{\mu}$ to $\BB(\ell^2(\G))$ as follows
\[
\mcp_{\mu}(T)= \sum\limits_{g \in \G} \mu(g) \rho_g T \rho_g^*, \text{ where } T \in \BB(\ell^2(\G)), \text{ and } \rho \text{ denotes the right regular representation. }
\]  
Izumi \cite{izumi4} showed that the noncommutative Poisson boundary of the ucp map $\mcp_{\mu}$ is the crossed product von Neumann algebra $L^{\infty}(\BB, \beta) \rtimes \G$, where $(\BB, \beta)$ is the classical Poisson boundary of $(\G, \mu)$. In \cite[Example 3.4]{BTK22} the authors raised the question regarding the peripheral eigenvalues corresponding to the ucp map $\mcp_{\mu}$. In our third main Theorem we show that all peripheral eigenvalues are roots of unity, and hence the Peripheral Poisson boundary is a von Neumann algebra in this case.
\begin{thma}
	Let $\mu$ be a probability measure  on a countable, discrete group $\G$, such that the support of $\mu$ generates $\G$ as a semigroup. Consider the noncommutative Markov operator $\mcp_{\mu}: \BB(\ell^2(\G)) \rightarrow \BB(\ell^2(\G)) $ given by $\mcp_{\mu}(T)= \sum_g \mu(g)\rho_g T \rho_g^*$. Let $\lambda \in \mathbb T$ be a peripheral eigenvalue for $\mcp_{\mu}$. Then, $\lambda^k =1$ for some positive integer $k$. \vskip 0.01 in
	In particular, the Peripheral Poisson boundary is a von Neumann algebra in this case. \vskip 0.01 in
	If $\mu$ is symmetric, then the only possible peripheral eigenvalues are $1$ and $-1$.
\end{thma}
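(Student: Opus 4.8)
The plan is to push the statement down to a scalar problem about peripheral eigenfunctions of the classical Markov operator $f\mapsto f\ast\mu$ on $\ell^\infty(\G)$, and then to exploit strict convexity of the closed unit disc together with the hypothesis that $S:=\operatorname{supp}(\mu)$ generates $\G$ as a semigroup. Set $D:=\{n\ge 1:e\in S^n\}$, which is an additive sub-semigroup of $\mathbb N$, and put $d:=\gcd D$; note that $d$ depends only on $\mu$. First I would reduce to scalars. If $0\neq T\in\BB(\ell^2(\G))$ satisfies $\mcp_\mu(T)=\lambda T$ with $\lambda\in\mathbb T$, then writing $T_{g,h}:=\langle T\delta_h,\delta_g\rangle$ and using $\rho_a^*\delta_x=\delta_{xa}$ one computes $(\rho_a T\rho_a^*)_{g,h}=T_{ga,ha}$, so the eigenvalue equation becomes $\sum_{a\in\G}\mu(a)\,T_{ga,ha}=\lambda\,T_{g,h}$ for all $g,h\in\G$. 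For each fixed $s\in\G$, the function $\psi_s(g):=T_{g,sg}$ then satisfies $\psi_s\ast\mu=\lambda\psi_s$ and $\|\psi_s\|_\infty\le\|T\|$; choosing $g_0,h_0$ with $T_{g_0,h_0}\neq 0$ and $s_0:=h_0g_0^{-1}$ produces a nonzero $\phi:=\psi_{s_0}\in\ell^\infty(\G)$ with $\phi\ast\mu=\lambda\phi$. So it suffices to prove that such a $\phi$ forces $\lambda^d=1$.

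Next I would produce an \emph{extremal} eigenfunction. Normalising $\|\phi\|_\infty=1$, pick $g_n$ with $|\phi(g_n)|\to 1$; the left translates $\phi(g_n\,\cdot\,)$ solve the same equation (convolution commutes with left translation), so a weak-$*$ cluster point $\phi_\infty$ of them in $\ell^\infty(\G)=\ell^1(\G)^*$ still satisfies $\phi_\infty\ast\mu=\lambda\phi_\infty$ (dominated convergence) with $|\phi_\infty(e)|=1=\|\phi_\infty\|_\infty$. Replacing $\phi$ by $\phi_\infty$, we have $|\phi(g_0)|=\|\phi\|_\infty=1$ for some $g_0$. Now $\lambda\phi(g_0)=\sum_{h\in S}\mu(h)\phi(g_0h)$ writes a unit-modulus number as a convex combination of points of the closed unit disc, which forces $\phi(g_0h)=\lambda\phi(g_0)$ for every $h\in S$; since then $|\phi(g_0h)|=1$, this can be iterated to give $\phi(g_0h_1\cdots h_n)=\lambda^n\phi(g_0)$ for all $h_1,\dots,h_n\in S$. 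Taking a length-$n$ word ($n\in D$) that represents $e$ gives $\lambda^n=1$; as this holds for every $n\in D$ and $d=\gcd D$ is a $\mathbb Z$-linear combination of finitely many elements of $D$, we get $\lambda^d=1$. This proves the first assertion and shows that the set of peripheral eigenvalues lies in the finite group $\{\zeta\in\mathbb T:\zeta^d=1\}$.

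For the symmetric case, $S=S^{-1}$ gives $ss^{-1}=e\in S^2$, so $2\in D$, hence $d\mid 2$ and every peripheral eigenvalue satisfies $\lambda^2=1$, i.e.\ $\lambda\in\{1,-1\}$. For the von Neumann algebra statement, I would use that the peripheral spectrum is contained in the finite cyclic group generated by $\zeta:=e^{2\pi i/d}$: since $\mcp_\mu^d=\mcp_{\mu^{\ast d}}$, one has $\ker(\mcp_\mu-\zeta^j\operatorname{id})\subseteq\operatorname{Fix}(\mcp_\mu^d)$, while the idempotents $Q_j:=\frac1d\sum_{n=0}^{d-1}\zeta^{-jn}\mcp_\mu^n$ satisfy $\sum_j Q_j=\operatorname{id}$ and carry $\operatorname{Fix}(\mcp_\mu^d)$ into $\bigoplus_j\ker(\mcp_\mu-\zeta^j\operatorname{id})$. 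Hence the norm-closed linear span of all peripheral eigenspaces of $\mcp_\mu$ coincides, as a subspace of $\BB(\ell^2(\G))$, with $\operatorname{Fix}(\mcp_{\mu^{\ast d}})$, which by Izumi's theorem is a von Neumann algebra under the Choi--Effros product. It then remains to verify, via Bhat's minimal dilation, that the Bhat--Talwar--Kar peripheral product on that span agrees with this Choi--Effros product; granting this, the Peripheral Poisson boundary is a von Neumann algebra.

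The main obstacle I anticipate is precisely this last compatibility check: the earlier steps are short once the matrix-coefficient identity is available, and the weak-$*$ compactness argument for a non-attained supremum is routine, but reconciling the peripheral Choi--Effros product (built from the minimal dilation of $\mcp_\mu$ on the a priori only norm-closed span of eigenspaces) with the ordinary Choi--Effros product on the fixed-point algebra of the power $\mcp_{\mu^{\ast d}}$ is what actually makes the conclusion ``is a von Neumann algebra'' go through, and will need to be done carefully.
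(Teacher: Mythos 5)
Your proposal is correct, and for the central analytic step it takes a genuinely different route from the paper. The reduction to a scalar eigenfunction is the same in substance: your diagonal matrix coefficients $\psi_s(g)=T_{g,sg}$ are exactly the Fourier coefficients $\mathcal E(T\lambda_{s}^*)$ used in the paper's Proposition~\ref{prop: eigenoperator to eigenfunction}, where one instead observes that the canonical conditional expectation $\mathcal E:\BB(\ell^2(\G))\to\ell^{\infty}(\G)$ commutes with $\mcp_{\mu}$. Where you diverge is the scalar problem: the paper first handles the case $e\in\operatorname{supp}(\mu)$ by citing Foguel's theorem that $\|\mu^{*n}-\mu^{*(n+1)}\|_{TV}\to 0$ whenever $\mu$ and $\mu^{*2}$ are not mutually singular, which forces $\|\lambda^n f-\lambda^{n+1}f\|_\infty\to 0$ and hence $\lambda=1$; it then applies this to $\mu^{*k}$ for the smallest $k$ with $e\in\operatorname{supp}(\mu^{*k})$ (respectively to $\mu*\mu$ in the symmetric case). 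Your argument replaces this with a weak-$*$ translation trick to make the supremum attained, followed by strict convexity of the disc and iteration along words representing $e$; this is more elementary and self-contained (no external total-variation convergence result), and it even yields the marginally sharper exponent $d=\gcd\{n: e\in S^n\}$, which divides the paper's $k$. On the final assertion, the paper does not argue the von Neumann algebra statement directly but invokes \cite[Corollary 4.4]{BTK22}; your identification of the closed span of peripheral eigenspaces with $\operatorname{Fix}(\mcp_{\mu^{*d}})$ via the idempotents $Q_j$, together with the product-compatibility check you flag as the remaining obstacle, is precisely the content of that cited corollary, so nothing essential is missing.
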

 
\section{Peripheral Eigenvalues}
Throughout this section we will assume that $\mu$ is a probability measure on a countable discrete group $\G$. The corresponding Markov operator on $\BB(\ell^2(\G))$ will be denoted by $\mcp_{\mu}$.
The following Proposition was communicated to the author by Prof.\ Jesse Peterson. The author would like to thank Prof.\ Peterson for allowing him to reproduce the argument here.
\begin{prop} \label{prop: eigenoperator to eigenfunction}
	Let $\mu$ be a probability measure on $\G$. Let $\eta \in \mathbb T$ be a peripheral eigenvalue of $\mcp_{\mu}$, and let $0 \neq T \in \BB(\HH)$ be a corresponding eigenvector, i.e., $\mcp_{\mu}(T)= \eta T$. Then there exists a nonzero $f \in \ell^{\infty}(\G)$ such that $f \ast \mu =\eta f$. In particular, if $\mu$ is a symmetric, generating probability measure on $\G$ then the Markov operator $\mcp_{\mu}$ admits $-1$ as an eigenvalue if and only if there exists a nonzero anti-$\mu$-harmonic function.
\end{prop}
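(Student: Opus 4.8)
The plan is to reduce the operator identity $\mcp_\mu(T)=\eta T$ on $\BB(\ell^2(\G))$ to the scalar identity $f\ast\mu=\eta f$ on $\G$ by compressing $T$ to the diagonal, after first using the built-in $L(\G)$-bimodularity of $\mcp_\mu$ to push a nonzero matrix coefficient of $T$ onto the diagonal. First I would record two structural facts. Identifying $\ell^\infty(\G)$ with the diagonal masa $\{m_f:f\in\ell^\infty(\G)\}\subseteq\BB(\ell^2(\G))$ of multiplication operators, and writing $\lambda$ for the left regular representation, a direct computation gives $\rho_g m_f\rho_g^*=m_{R_gf}$ with $(R_gf)(h)=f(hg)$, and hence $\mcp_\mu(m_f)=m_{f\ast\mu}$; so $\mcp_\mu$ restricted to $\ell^\infty(\G)$ is exactly the classical Markov operator $f\mapsto f\ast\mu$. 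Second, since the left and right regular representations commute, each $\lambda_s$ commutes with every $\rho_g$, so $\mcp_\mu(\lambda_s X\lambda_t)=\lambda_s\,\mcp_\mu(X)\,\lambda_t$ for all $s,t\in\G$ and all $X$; in particular $\lambda_s T\lambda_t$ is again an $\eta$-eigenvector of $\mcp_\mu$ for every $s,t$.

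Next I would bring in the normal conditional expectation $E\colon\BB(\ell^2(\G))\to\ell^\infty(\G)$ onto the diagonal masa, i.e. $E(X)$ is the bounded function $g\mapsto\langle X\delta_g,\delta_g\rangle$. Using $\rho_h^*\delta_g=\delta_{gh}$ one checks $\langle\mcp_\mu(X)\delta_g,\delta_g\rangle=\sum_h\mu(h)\langle X\delta_{gh},\delta_{gh}\rangle=(E(X)\ast\mu)(g)$, that is, $E\circ\mcp_\mu=\mcp_\mu\circ E$ (under the identification of the previous paragraph). Applying $E$ to $\mcp_\mu(T)=\eta T$ then gives $E(T)\ast\mu=\eta\,E(T)$, so $E(T)\in\ell^\infty(\G)$ is a natural candidate for the desired eigenfunction.

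The one step I expect to be the real obstacle is that $E(T)$ might be $0$ --- $T$ could be supported entirely off the diagonal --- and this is precisely where the bimodularity is used. Since $T\neq0$, I would choose $a,b\in\G$ with $\langle T\delta_b,\delta_a\rangle\neq0$ and set $T':=\lambda_{a^{-1}}T\lambda_b$. Then $T'$ is still an $\eta$-eigenvector of $\mcp_\mu$, while $\langle T'\delta_e,\delta_e\rangle=\langle T\delta_b,\delta_a\rangle\neq0$; hence $f:=E(T')$ is a nonzero element of $\ell^\infty(\G)$ satisfying $f\ast\mu=\eta f$, which proves the first assertion.

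Finally, for the ``in particular'' clause: one direction is the case $\eta=-1$ of the above, so a peripheral eigenvalue $-1$ of $\mcp_\mu$ produces a nonzero $f$ with $f\ast\mu=-f$, i.e. a nonzero anti-$\mu$-harmonic function. The converse is immediate from $\mcp_\mu(m_f)=m_{f\ast\mu}$: a nonzero anti-$\mu$-harmonic function $f$ gives the nonzero operator $m_f$ with $\mcp_\mu(m_f)=m_{-f}=-m_f$, so $-1$ is a peripheral eigenvalue of $\mcp_\mu$. (The symmetric, generating hypothesis only serves to make ``anti-$\mu$-harmonic'' the natural object here; the argument above does not actually use it.)
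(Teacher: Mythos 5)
Your proof is correct and follows essentially the same route as the paper: compress $T$ to the diagonal via the conditional expectation $E$ onto $\ell^\infty(\G)$, use $E\circ\mcp_\mu=\mcp_\mu\circ E$, and exploit the fact that $\mcp_\mu$ commutes with multiplication by elements of the left regular representation to locate a nonzero eigenfunction. The only (cosmetic) difference is that the paper extracts a nonzero Fourier coefficient $\mathcal E(T\lambda_g^*)$, whereas you translate a nonzero matrix entry of $T$ to the $(e,e)$ position via $\lambda_{a^{-1}}T\lambda_b$; both work for the same reason.
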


\begin{proof}
	Consider the Fourier expansion $T= \sum_{g \in \G}\mathcal E(T \lambda_g^*)\lambda_g$, where $\mathcal E: \BB(\HH) \rightarrow \ell^{\infty}(\G)$ is the canonical normal conditional expectation. Note that $\mathcal E \circ \mcp_{\mu}= \mcp_{\mu} \circ \mathcal E$. Hence, for all $g \in \G$ we have $ \mcp_{\mu}(\mathcal E(T\lambda_{g}^*))=\mathcal E(\mcp_{\mu}(T \lambda_g^*))=\mathcal E(\mcp_{\mu}(T) \lambda_g^*)=\eta \mathcal E(T \lambda_g^*)$. Since $T$ is nonzero, we can find $g \in \G$ such that $f := \mathcal E(T \lambda_g^*) \neq 0$. Hence, $f \ast \mu = \eta f$.	
\end{proof}
We will provide a proof of Theorem C. We begin with the case when $e \in \text{ supp}(\mu)$, and show that the only possible peripheral eigenvalue is $1$ in this case. Our initial proof of this Theorem was quite involved. The author would like to thank Prof.\ Jesse Peterson for the following elegant argument.
\begin{thm} \label{thm: eigenvalue for identity in support case}
	Let $\mu$ be a probability measure on $\G$, with $e \in \text{ supp}(\mu)$. Let $T \in \BB(L^2(M))$ be a nonzero operator, with $\mcp_{\mu}(T)= \lambda T$ for some $\lambda \in \mathbb T$. Then, $\lambda =1$.
\end{thm}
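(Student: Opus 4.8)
The plan is to use the hypothesis $e \in \operatorname{supp}(\mu)$ to peel off an identity term from the Markov operator. Set $\epsilon := \mu(e) > 0$. If $\epsilon = 1$ then $\operatorname{supp}(\mu) = \{e\}$ and $\mcp_{\mu} = \operatorname{id}$, so $\lambda T = T$ with $T \neq 0$ forces $\lambda = 1$; hence we may assume $0 < \epsilon < 1$. Then I would write
\[
\mcp_{\mu} = \epsilon\,\operatorname{id} + (1-\epsilon)\, Q, \qquad Q(S) := \frac{1}{1-\epsilon}\sum_{g \neq e}\mu(g)\,\rho_g S \rho_g^{*},
\]
and observe that $Q$ is again a normal unital completely positive map on $\BB(L^2(M))$, so in particular $\|Q\| = 1$.

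Next, feeding the eigenvector $T$ into this decomposition: from $\mcp_{\mu}(T) = \lambda T$ we get $\epsilon T + (1-\epsilon)Q(T) = \lambda T$, hence $Q(T) = \alpha T$ with $\alpha = \frac{\lambda - \epsilon}{1 - \epsilon}$. Since $T \neq 0$, taking operator norms gives $|\alpha|\,\|T\| = \|Q(T)\| \le \|Q\|\,\|T\| = \|T\|$, so $|\alpha| \le 1$, that is,
\[
|\lambda - \epsilon| \le 1 - \epsilon .
\]

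Finally I would close the argument with an elementary planar geometry computation. Writing $\lambda = e^{i\theta} \in \mathbb T$, one has $|\lambda - \epsilon|^2 = 1 - 2\epsilon\cos\theta + \epsilon^2$, so the inequality $|\lambda - \epsilon|^2 \le (1-\epsilon)^2 = 1 - 2\epsilon + \epsilon^2$ forces $\cos\theta \ge 1$, hence $\theta \equiv 0 \pmod{2\pi}$ and $\lambda = 1$. (Equivalently: $1$ is the unique point of $\mathbb T$ nearest to $\epsilon \in (0,1)$.)

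I do not expect a genuine obstacle here: the whole argument rests only on $\rho_g$ being unitaries and on $Q$ being a contraction, which is automatic from unitality and (complete) positivity; no dilation theory or fine structure of $L(\G)$ is needed. The only points requiring a moment's care are the degenerate case $\mu(e) = 1$ and the verification that the eigenvalue inequality $|\lambda - \epsilon| \le 1-\epsilon$ does indeed pin down $\lambda = 1$ rather than merely constraining it.
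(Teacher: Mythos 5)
Your argument is correct, and it is genuinely different from the one in the paper. You split off the identity term, writing $\mcp_{\mu}=\epsilon\,\mathrm{id}+(1-\epsilon)Q$ with $\epsilon=\mu(e)>0$ and $Q$ a unital completely positive (hence contractive) map, so that $T$ becomes an eigenvector of $Q$ with eigenvalue $\alpha=\frac{\lambda-\epsilon}{1-\epsilon}$; the contractivity of $Q$ forces $|\lambda-\epsilon|\le 1-\epsilon$, and the disk $\{z:|z-\epsilon|\le 1-\epsilon\}$ is internally tangent to $\mathbb T$ at the single point $1$, so $\lambda=1$. All steps check out: $Q$ is a contraction already from the triangle inequality and unitarity of the $\rho_g$ (complete positivity is not even needed), the degenerate case $\mu(e)=1$ is handled, and the tangency computation is right. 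The paper instead reduces to scalar-valued functions via the canonical conditional expectation $\mathcal E:\BB(\ell^2(\G))\to\ell^{\infty}(\G)$ (Proposition~\ref{prop: eigenoperator to eigenfunction}) and then invokes Foguel's theorem, which gives $\|\mu^{n}-\mu^{n+1}\|_{TV}\to 0$ whenever $\mu$ and $\mu^{\ast 2}$ are not mutually singular, so that $\|\lambda^n f-\lambda^{n+1}f\|_\infty\to 0$ forces $\lambda=1$. Your route is more elementary and self-contained (no Foguel, no Fourier-coefficient reduction) and works directly at the operator level; it is essentially the $T_1=\mathrm{id}$ special case of the convex-combination trick the paper deploys later via Revuz's lemma (Lemma~\ref{lem:commuting contaction}) in Theorem~\ref{thm:joint bi-harmonic1}. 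The paper's route, by contrast, yields the stronger asymptotic information $\|f\ast\mu^n-f\ast\mu^{n+1}\|_\infty\to 0$ and sets up the conditional-expectation reduction that is reused elsewhere.
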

\begin{proof}
	As $e \in \text{ supp}(\mu)$, $\mu$ and $\mu^{\ast 2}$ are not mutually singular. Hence, by \cite[Corollary 2]{foguel} we have that $\| \mu^{ n}- \mu^{ n+1}\|_{TV} \rightarrow 0$. Hence, for any $f \in \ell^{\infty}(\G)$, we have that 
	\begin{equation} \label{eq: foguel equation convolution going to zero }
	\| f \ast \mu^n-f\ast \mu^{n+1}\|_{\infty} \rightarrow 0.
	\end{equation}
	 Suppose $0 \neq f \in \ell^{\infty}(\G)$ with $f \ast \mu =\lambda f$ for some $\lambda \in \mathbb T$. Then by equation~\ref{eq: foguel equation convolution going to zero } we get that 
	 \begin{equation} \label{eq: eigenvalue equal to 1 }
	 \|\lambda^nf-\lambda^{n+1}f\|_{\infty} \rightarrow 0 \implies |\lambda^n| \cdot |\lambda-1|\cdot \|f\|_{\infty} \rightarrow 0 \implies |\lambda-1|\cdot \|f\|_{\infty} \rightarrow 0, \text{ as }|\lambda^n|=1.
	 \end{equation}
	 As $f\neq 0$, from equation~\ref{eq: eigenvalue equal to 1 } we get that $\lambda=1$.
	 \vskip 0.01 in
	 Now let $T \in \BB(L^2(M))$ be a nonzero operator, with $\mcp_{\mu}(T)= \lambda T$ for some $\lambda \in \mathbb T$. Let $\mathcal E: \BB(L^2(M)) \rightarrow \ell^{\infty}(\G)$ denote the normal conditional expectation. Since $T \neq 0$, by Proposition~\ref{prop: eigenoperator to eigenfunction} we can find $g \in \G$ such that $f:= \mathcal E(T u_g^*) \neq 0$ and $\mcp_{\mu}(f)=\lambda f$. Thus, by the argument in the previous paragraph, we get that $\lambda =1$.
\end{proof}
We now state our main result of this section, showing that the possible peripheral eigenvalues for symmetric, generating measure can only be $\pm 1$. Note that $1$ is always a peripheral eigenvalue, as $\mcp_{\mu}$ is unital. In later parts of the paper we will show that $-1$ can also appear as an eigenvalue of $\mcp_{\mu}$, and explore the consequences.
\begin{thm} \label{thm: peripheral eigenvalue symmetric case}
	Let $\mu$ be a symmetric probability measure on $\G$. Let $T \in \BB(L^2(M))$ be a nonzero operator, with $\mcp_{\mu}(T)= \lambda T$ for some $\lambda \in \mathbb T$. Then, $\lambda= \pm1$.
\end{thm}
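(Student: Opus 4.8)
The plan is to deduce the symmetric case from Theorem~\ref{thm: eigenvalue for identity in support case} by passing to the convolution square $\mu^{\ast 2}$, whose support automatically contains the identity. First I would record that the noncommutative Markov operator is multiplicative under convolution of measures: since the right regular representation satisfies $\rho_h\rho_g = \rho_{hg}$, a direct computation gives
\[
\mcp_{\mu}\big(\mcp_{\mu}(T)\big) = \sum_{g,h \in \G}\mu(h)\mu(g)\,\rho_{hg}T\rho_{hg}^{*} = \sum_{k \in \G}\mu^{\ast 2}(k)\,\rho_k T \rho_k^{*} = \mcp_{\mu^{\ast 2}}(T),
\]
so $\mcp_{\mu}\circ\mcp_{\mu} = \mcp_{\mu^{\ast 2}}$. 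Hence if $T \neq 0$ satisfies $\mcp_{\mu}(T) = \lambda T$, then $\mcp_{\mu^{\ast 2}}(T) = \lambda^2 T$; that is, $\lambda^2$ is a peripheral eigenvalue of $\mcp_{\mu^{\ast 2}}$ with the same nonzero eigenvector $T$.

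Next I would check the elementary point that $e \in \text{supp}(\mu^{\ast 2})$. As $\mu$ is a probability measure, $\mu(g) > 0$ for some $g \in \G$, and then symmetry of $\mu$ gives $\mu^{\ast 2}(e) = \sum_{h \in \G}\mu(h)\mu(h^{-1}) \geq \mu(g)\mu(g^{-1}) = \mu(g)^2 > 0$. Thus $\mu^{\ast 2}$ is a probability measure on $\G$ with $e \in \text{supp}(\mu^{\ast 2})$, so Theorem~\ref{thm: eigenvalue for identity in support case} applies to the pair $(\mu^{\ast 2}, T)$ and yields $\lambda^2 = 1$, i.e.\ $\lambda = \pm 1$, which is exactly the claim.

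I expect no serious obstacle here; the only steps requiring any care are the identity $\mcp_{\mu}\circ\mcp_{\mu} = \mcp_{\mu^{\ast 2}}$ (the one-line computation above, using $\rho_h\rho_g=\rho_{hg}$) and the observation that symmetry forces $e$ into the support of $\mu^{\ast 2}$. As an alternative that stays entirely at the function level, one could instead invoke Proposition~\ref{prop: eigenoperator to eigenfunction} to obtain a nonzero $f \in \ell^{\infty}(\G)$ with $f \ast \mu = \lambda f$, hence $f \ast \mu^{\ast 2} = \lambda^2 f$, and then rerun the Foguel-type argument from the proof of Theorem~\ref{thm: eigenvalue for identity in support case} with $\mu^{\ast 2}$ in place of $\mu$; but the reduction above is the most economical route.
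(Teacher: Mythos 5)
Your proposal is correct and is essentially identical to the paper's proof: both pass to $\nu=\mu\ast\mu$, observe that symmetry forces $e\in\operatorname{supp}(\nu)$ and that $\mcp_{\nu}=\mcp_{\mu}^{2}$, and then apply Theorem~\ref{thm: eigenvalue for identity in support case} to conclude $\lambda^{2}=1$. The only difference is that you spell out the verification of $\mcp_{\mu}\circ\mcp_{\mu}=\mcp_{\mu^{\ast 2}}$ and of $e\in\operatorname{supp}(\mu^{\ast 2})$, which the paper leaves implicit.
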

\begin{proof}
	Let $\nu= \mu \ast \mu$. As $\mu$ is symmetric, $e \in \text{ supp}(\nu)$. Note that $\mcp_{\nu}(T)= \mcp_{\mu}^2(T)= \lambda^2 T$. By Theorem~\ref{thm: eigenvalue for identity in support case} we get that $\lambda^2=1$. Hence, $\lambda=\pm 1$.
\end{proof}
Recently, in \cite[Example 3.4]{BTK22} Bhat, Talwar and Kar raised the problem of finding possible peripheral eigenvalues for Markov operators arising from noncommutative extension of random walks on groups as studied by Izumi in \cite{izumi4}. Recall that Izumi considered a probability measure $\mu$ on a countable, discrete group $\G$, such that the support of $\mu$ generates $\G$ as a semigroup. We now show that the only possible peripheral eigenvalues in this case can be roots of unity.
\begin{cor}
	Let $\mu$ be a probability measure  on a countable, discrete group $\G$, such that the support of $\mu$ generates $\G$ as a semigroup. Consider the noncommutative Markov operator $\mcp_{\mu}: \BB(\ell^2(\G)) \rightarrow \BB(\ell^2(\G)) $ given by $\mcp_{\mu}(T)= \sum_g \mu(g)\rho_g T \rho_g^*$. Let $\lambda \in \mathbb T$ be a peripheral eigenvalue for $\mcp_{\mu}$. Then, $\lambda^k =1$ for some positive integer $k$.
\end{cor}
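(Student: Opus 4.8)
The plan is to mimic the proof of Theorem~\ref{thm: peripheral eigenvalue symmetric case} and reduce to Theorem~\ref{thm: eigenvalue for identity in support case} by replacing $\mu$ with a suitable convolution power. The key elementary observation is that the noncommutative Markov operator is multiplicative under convolution of measures: since the right regular representation is a homomorphism, $\rho_g\rho_h=\rho_{gh}$, a direct computation gives $\mcp_{\mu}^{k}(S)=\sum_{g\in\G}\mu^{\ast k}(g)\,\rho_g S\rho_g^{*}=\mcp_{\mu^{\ast k}}(S)$ for every $S\in\BB(\ell^2(\G))$ and every $k\geq 1$.

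Next I would use the semigroup-generation hypothesis to force $e$ into the support of some convolution power. Because $\text{supp}(\mu)$ generates $\G$ as a semigroup, the identity element can be written as a product $e=g_1g_2\cdots g_k$ with $k\geq 1$ and each $g_i\in\text{supp}(\mu)$; hence $\mu^{\ast k}(e)\geq\mu(g_1)\mu(g_2)\cdots\mu(g_k)>0$, so $e\in\text{supp}(\mu^{\ast k})$.

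Finally, let $T\in\BB(\ell^2(\G))$ be a nonzero eigenvector with $\mcp_{\mu}(T)=\lambda T$ for $\lambda\in\mathbb T$. Applying $\mcp_{\mu}$ repeatedly and using the multiplicativity from the first paragraph, $\mcp_{\mu^{\ast k}}(T)=\mcp_{\mu}^{k}(T)=\lambda^{k}T$. Thus $\lambda^{k}$ is a peripheral eigenvalue of $\mcp_{\nu}$ for the probability measure $\nu:=\mu^{\ast k}$, whose support contains $e$. By Theorem~\ref{thm: eigenvalue for identity in support case} applied to $\nu$, we conclude $\lambda^{k}=1$.

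I do not expect a genuine obstacle here: the only point that deserves care is that semigroup (as opposed to group) generation is exactly the hypothesis that makes $e$ a bona fide positive-length product of elements of $\text{supp}(\mu)$, which is what places $e$ in the support of $\mu^{\ast k}$ and unlocks Theorem~\ref{thm: eigenvalue for identity in support case}. When $\mu$ is symmetric one may take $k=2$, which recovers Theorem~\ref{thm: peripheral eigenvalue symmetric case}.
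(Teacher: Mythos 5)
Your proposal is correct and follows the same route as the paper: use semigroup generation to find $k$ with $e\in\text{supp}(\mu^{\ast k})$, note $\mcp_{\mu}^{k}=\mcp_{\mu^{\ast k}}$, and invoke Theorem~\ref{thm: eigenvalue for identity in support case} to get $\lambda^{k}=1$. The paper's proof is just a terser version of exactly this argument.
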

\begin{proof}
	As support of $\mu$ generates $\G$ as a semigroup, there exists a smallest positive integer $k$ such that $e \in \text{ supp}(\mu^k)$. By Theorem~\ref{thm: eigenvalue for identity in support case} we get that $\lambda^k=1$.
\end{proof}
\section{Bi-harmonic operators}

In this section we provide a complete description of jointly bi-harmonic functions. This will follow from the characterization of jointly bi-harmonic operators. We first need a lemma from Revuz's book \cite[Chapter 5, Lemma 1.1]{revuz}. We reproduce the short proof for reader's convenience.

\begin{lem} \label{lem:commuting contaction}
	Let $X$ be a Banach space and let $T_1, \text{ } T_2 \in(\BB(X))_1$ be commuting contractions. Suppose there exists $0<a<1$ and $x \in X$ such that $(aT_1+ (1-a)T_2)(x)=x$. Then $T_1(x)=T_2(x)=x$.
\end{lem}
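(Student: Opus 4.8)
The approach I would take is to distil the only quantitative consequence of the identity $(aT_1+(1-a)T_2)x=x$ that matters, then to localize to the subspace of vectors fixed by $S:=aT_1+(1-a)T_2$ — on which $T_1$ and $T_2$ become \emph{isometries} linked by an affine operator identity — and finally to recognize the localized statement as an instance of a classical spectral rigidity theorem (Gelfand's theorem). The trivial case $x=0$ is disposed of immediately, and if the scalars are real one passes to the complexification before invoking spectral theory.

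First, from $Sx=x$ one has $S^nx=x$ for every $n$, and commutativity of $T_1,T_2$ gives $S^n=\sum_{k=0}^n\binom nk a^k(1-a)^{n-k}T_1^kT_2^{n-k}$. Hence $x$ is a convex combination, with strictly positive weights, of the vectors $T_1^kT_2^{n-k}x$, each of norm $\le\|x\|$; comparing norms forces $\|T_1^kT_2^{m}x\|=\|x\|$ for all $k,m\ge 0$, and the same applies to every $S$-fixed vector. Next, let $\mathcal F=\ker(I-S)$, a closed subspace containing $x$; since $ST_i=T_iS$, both $T_1$ and $T_2$ preserve $\mathcal F$, and the restrictions $A:=T_1|_{\mathcal F}$, $B:=T_2|_{\mathcal F}$ are commuting contractions with $aA+(1-a)B=I_{\mathcal F}$. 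By the previous step $\|A\xi\|=\|B\xi\|=\|\xi\|$ for all $\xi\in\mathcal F$, so $A$ and $B$ are isometries, and the identity gives $B=\tfrac1{1-a}I-\tfrac a{1-a}A$.

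Now I would analyse the spectrum of $A$. Being an isometry, $A$ is power-bounded, so $\sigma(A)\subseteq\overline{\mathbb D}$; the affine relation together with $\sigma(B)\subseteq\overline{\mathbb D}$ forces $\sigma(A)\subseteq\{\lambda:|a\lambda-1|\le 1-a\}$, the closed disc of centre $1/a$ and radius $(1-a)/a$ — a disc that lies in $\{|\lambda|\ge1\}$ and meets $\overline{\mathbb D}$ only at $\lambda=1$. Hence $\sigma(A)=\{1\}$; in particular $0\notin\sigma(A)$, so $A$ is invertible, hence a surjective isometry, hence $\sup_{n\in\mathbb Z}\|A^n\|=1$. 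By Gelfand's theorem an invertible operator with uniformly bounded integer powers and spectrum $\{1\}$ must be the identity, so $A=I_{\mathcal F}$; as $x\in\mathcal F$ this is exactly $T_1x=x$, whence $T_2x=\tfrac1{1-a}(x-aT_1x)=x$. (Alternatively, from $\sigma(A)=\sigma(B)=\{1\}$ and power-boundedness one may invoke the Katznelson–Tzafriri theorem to get $\|A^n(I-A)\|\to0$ and $\|B^n(I-A)\|\to0$, and then feed this into the binomial expansion $I=S^n$ to conclude $I-A=0$; Gelfand's theorem is the more economical route.)

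The step I expect to be the obstacle is precisely the passage from "norm rigidity" to "$T_1x=x$": in a general Banach space the local information of the first step — that $x$ is a strictly convex combination of the norm-preserving vectors $T_1x$ and $T_2x$ — is genuinely not enough (in a Hilbert space it would already conclude by Cauchy–Schwarz, without even using commutativity, but the failure of strict convexity in $\ell^\infty$ shows the Banach case needs more, and without commutativity the statement is simply false). What rescues the argument is that commutativity makes $\mathcal F$ invariant, turning a convexity statement into a statement about the peripheral spectrum of an isometry, after which the conclusion is the (nontrivial but classical) rigidity theorem of Gelfand. I would accordingly spend the bulk of the write-up making the spectral inclusion of the third paragraph precise and verifying the hypotheses of Gelfand's theorem, the earlier steps being routine.
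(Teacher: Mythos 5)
Your proof is correct, but it follows a genuinely different route from the one in the paper. The paper reproduces Revuz's elementary argument: writing $S=aT_1+(1-a)T_2$, one has the operator identity $\exp(-a^{-1})\exp(a^{-1}S)=\exp(-(I-T_1))\exp(1-a^{-1})\exp((a^{-1}-1)T_2)$, applies the $n$-th power to $x$, and kills the term $(I-T_1)x$ using the purely computational estimate $\|(I-T_1)\exp(-n(I-T_1))\|\leq 2c_{n,n}\to 0$ with $c_{n,n}=n^n/(e^n n!)$ and Stirling's formula. Your argument instead localizes to the fixed subspace $\mathcal F=\ker(I-S)$, uses the binomial expansion of $S^n$ and strict positivity of the weights to upgrade the restrictions $A=T_1|_{\mathcal F}$, $B=T_2|_{\mathcal F}$ to isometries, and then runs a spectral argument: the affine relation $B=\tfrac{1}{1-a}(I-aA)$ together with $\sigma(A),\sigma(B)\subseteq\overline{\mathbb D}$ pins $\sigma(A)$ to the single point $1$ (the disc $|\lambda-1/a|\leq(1-a)/a$ meets $\overline{\mathbb D}$ only there), after which Gelfand's theorem on doubly power-bounded operators with spectrum $\{1\}$ gives $A=I_{\mathcal F}$. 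All the individual steps check out (the convexity argument forcing $\|T_1^kT_2^m\xi\|=\|\xi\|$ for $S$-fixed $\xi$, the invariance of $\mathcal F$ under $T_1,T_2$ via commutativity, the polynomial spectral mapping, invertibility of $A$ from $0\notin\sigma(A)$, and the complexification caveat for real scalars). The trade-off is that your route outsources the hard analytic content to a nontrivial classical theorem (Gelfand, or alternatively Katznelson--Tzafriri), whereas the paper's proof is self-contained and needs nothing beyond Stirling's formula; on the other hand your version isolates more clearly \emph{why} the lemma is true --- commutativity converts a convexity statement that is too weak in non-strictly-convex spaces into a peripheral-spectrum rigidity statement --- and it makes transparent where commutativity is genuinely used.
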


\begin{proof}
	Let $S=aT_1+ (1-a)T_2$. Using the fact that $T_1$ commutes with $T_2$, a direct calculation yields
	\begin{equation} \label{eq:revuztrick}
	\exp(-a^{-1})\exp(a^{-1}S)=\exp(-(I-T_1))\exp(1-a^{-1})\exp((a^{-1}-1)T_2).
	\end{equation}
	Let $U= \exp(1-a^{-1})\exp((a^{-1}-1)T_2)$, and note that $\|U\| \leq 1$. Taking $n^{th}$ powers on both sides of equation~\ref{eq:revuztrick} and using $S(x)=x$ we get
	\begin{align*}
	& x= \exp(-na^{-1})\exp(na^{-1}S)(x)= \exp(-n(I-T_1))U^n(x) \text{ for all }n\\
&	\implies (I-T_1)(x)= [(I-T_1)\exp(-n(I-T_1))]U^n(x) \text{ for all }n\\
&	\implies \| (I-T_1)(x) \|= \lim\limits_{n \rightarrow \infty} \|[(I-T_1)\exp(-n(I-T_1))]U^n(x)\| \leq \lim\limits_{n \rightarrow \infty} \|[(I-T_1)\exp(-n(I-T_1))]\| \cdot \|x\| = 0,
	\end{align*}
	where we used the fact that $\lim\limits_{n \rightarrow \infty} \|[(I-T_1)\exp(-n(I-T_1))]\|=0$. \vskip 0.01 in 
	To see this, first note that $\exp(-n(I-T_1))=\exp(-n)\exp(nT)=\sum\limits_{j=0}^{\infty} c_{j,n}T^j$, where $c_{j,n}=\frac{n^j}{e^nj!}$. For fixed $n$, the sequence $c_{j,n}$ increases till $j=n$ and then decreases. So we have
	$$\|[(I-T_1)\exp(-n(I-T_1))]\| \leq \sum_j |c_{j,n}-c_{j-1,n}| =\sum\limits_{j=0}^{n}(c_{j,n}-c_{j-1,n})+\sum\limits_{j=n+1}^{\infty}(c_{j-1,n}-c_{j,n})=2c_{n,n}-c_{0,n}\leq 2c_{n,n}. $$
	But $\lim\limits_{n \rightarrow \infty}c_{n,n}= \lim\limits_{n \rightarrow \infty}\frac{n^n}{e^n n!}=\lim\limits_{n \rightarrow \infty}\frac{1}{\sqrt{ 2 \pi n}}=0$, by Stirling's approximation formula. Thus we get $(I-T_1)(x)=x$, which yields $T_1(x)=x=T_2(x)$.
\end{proof}

\begin{thm} \label{thm:joint bi-harmonic1}
	Let $\nu$ be a probability measure on $\G$ such that $e \in \text{Support}(\nu)$. Let $T \in \BB(\ell^2(\G))$ such that $\mathcal P_{\nu}\circ \mcp_{\nu}^{o}(T)=T$. Then $T \in \Har(\mcp_{\nu})\cap \Har(\mcp_{\nu}^o)$. In particular, if $\nu$ is symmetric, and generating, then $T \in \mathcal Z(L(\G))$.
\end{thm}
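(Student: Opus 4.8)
\emph{Plan.} I would prove the two assertions separately: the first (separate bi-harmonicity) by splitting $\nu$ at the identity and invoking Revuz's lemma, the second (``in particular'') by a Fourier analysis of $T$ inside the crossed product $\BB(\ell^2\G)=\ell^\infty(\G)\rtimes\G$.

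\emph{Separate bi-harmonicity.} Since $e\in\text{Support}(\nu)$, set $a:=\nu(e)$; if $a=1$ then $\nu=\delta_e$ and there is nothing to prove, so assume $a\in(0,1)$ and write $\nu=a\delta_e+(1-a)\nu'$ for a probability measure $\nu'$. Then $\mcp_\nu=aI+(1-a)\mcp_{\nu'}$ and $\mcp_\nu^o=aI+(1-a)\mcp_{\nu'}^o$. Expanding the product $\mcp_\nu\circ\mcp_\nu^o$, substituting into $\mcp_\nu\circ\mcp_\nu^o(T)=T$, cancelling one factor of $(1-a)$ and dividing by $(1+a)$ yields
\[
\tfrac{a}{1+a}\,\mcp_{\nu'}(T)+\tfrac{a}{1+a}\,\mcp_{\nu'}^o(T)+\tfrac{1-a}{1+a}\,\mcp_{\nu'}\mcp_{\nu'}^o(T)=T,
\]
a fixed-point identity for a strict convex combination of the three pairwise commuting contractions $\mcp_{\nu'}$, $\mcp_{\nu'}^o$, $\mcp_{\nu'}\mcp_{\nu'}^o$ (left and right conjugations commute). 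Applying Lemma~\ref{lem:commuting contaction} twice (the three-term version reduces to the two-term one by grouping two summands) gives $\mcp_{\nu'}(T)=\mcp_{\nu'}^o(T)=T$, hence $\mcp_\nu(T)=aT+(1-a)T=T$ and similarly $\mcp_\nu^o(T)=T$, i.e.\ $T\in\Har(\mcp_\nu)\cap\Har(\mcp_\nu^o)$.

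\emph{Landing in the center.} Now let $\nu$ be symmetric and generating, and write $T=\sum_k a_k\lambda_k$ with $a_k=\mathcal E(T\lambda_k^*)\in\ell^\infty(\G)$. Since $\mcp_\nu$ is $\lambda(\G)$-bimodular and acts on coefficients by right convolution, $\mcp_\nu(T)=T$ means $a_k\ast\nu=a_k$ for all $k$; a short computation shows $\mcp_\nu^o(T)=T$ is equivalent to the coupling $a_k=\sum_h\nu(h)\,\lambda_h a_{h^{-1}kh}\lambda_h^*$ for all $k$, where $\lambda_h(\cdot)\lambda_h^*$ is left translation on $\ell^\infty(\G)$. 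For $k$ central in $\G$ the coupling collapses, by symmetry of $\nu$, to $\nu\ast a_k=a_k$, so $a_k$ is separately bi-harmonic and hence constant by Willis's theorem. For general $k$ I would exploit boundedness of $T$: for each $y\in\G$ one has $\sum_k|a_k(y)|^2\le\|T\|^2$, so $w_y:=(a_k(y))_k$ is a bounded $\ell^2(\G)$-valued function which, by the two conditions above, is right-$\nu$-harmonic in $y$ and satisfies $w_y=\sum_h\nu(h)\,\sigma_h w_{h^{-1}y}$, where $\sigma_h$ is the conjugation unitary $(\sigma_h v)(k)=v(h^{-1}kh)$ of $\ell^2(\G)$.

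Represent $w_y=\int_\BB\hat w\,d\nu_y$ by the Poisson formula for bounded $\ell^2(\G)$-valued harmonic functions, where $\beta$ is the $\nu$-stationary measure on the Poisson boundary $\BB$ and $\nu_y=y_*\beta$. Feeding this into the coupling and using equivariance $\nu_{hy}=h_*\nu_y$ gives $\hat w(\xi)=\sum_h\nu(h)\sigma_h\hat w(h^{-1}\xi)$ a.e.; integrating $\|\cdot\|_{\ell^2}^2$ against $\beta$ and using stationarity shows $\xi\mapsto\|\hat w(\xi)\|^2$ is $\nu$-harmonic on $\BB$, whereupon the equality case of Jensen's inequality together with $e\in\text{Support}(\nu)$ and the generation hypothesis forces $\hat w(h\xi)=\sigma_h\hat w(\xi)$, i.e.\ $\hat w$ is $\sigma$-equivariant. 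Ergodicity of $\G\actson(\BB,\beta)$ then makes $\|\hat w\|$ essentially constant, and equivariance of $\hat w$ translates back to $a_k(g)=c_{g^{-1}kg}$ for $c:=(a_m(e))_m\in\ell^2(\G)$; hence $T\in\rho(\G)''$ and $c$ is harmonic for the symmetric conjugation walk $c_m=\sum_h\nu(h)c_{h^{-1}mh}$. Since $c\in\ell^2(\G)$ and this walk is symmetric, a Dirichlet-form argument forces $c$ to be constant on conjugacy classes, hence supported on the $FC$-center, so $T\in\rho(\G)''\cap\lambda(\G)''=\mathcal Z(L(\G))$. (Alternatively, the identity $\Har(\mcp_\nu)\cap\Har(\mcp_\nu^o)=\mathcal Z(L(\G))$ is an operator double-ergodicity statement for the commuting ucp maps $\mcp_\nu,\mcp_\nu^o$ and may be quoted from the theory of Poisson boundaries of finite von Neumann algebras.) I expect the main obstacle to be precisely this last stretch: organizing the Poisson representation and the stationarity/Jensen step so that the two separate harmonicity conditions combine into $\sigma$-equivariance of the boundary datum; once that is in place, boundary ergodicity and the $\ell^2$ conjugation-walk argument finish routinely.
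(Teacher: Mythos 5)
Your proof of the main assertion is essentially the paper's: both arguments split off the atom of $\nu$ at $e$ and feed the resulting strict convex combination into Lemma~\ref{lem:commuting contaction}. The paper groups slightly more economically, writing $\mcp_{\nu}=\nu(e)I+(1-\nu(e))S$ and applying the lemma once to the two commuting contractions $\mcp_{\nu}^o$ and $S\circ\mcp_{\nu}^o$ (which immediately gives $\mcp_{\nu}^o(T)=T$ and then $\mcp_{\nu}(T)=T$), whereas you expand both factors and get a three-term convex combination of $\mcp_{\nu'}$, $\mcp_{\nu'}^o$, $\mcp_{\nu'}\mcp_{\nu'}^o$; your reduction of the three-term case to the two-term lemma by grouping is valid, since these operators pairwise commute and all coefficients are strictly positive when $0<\nu(e)<1$ (with $\nu=\delta_e$ trivial, as you note). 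For the ``in particular'' clause the paper simply quotes the double ergodicity theorem of Das--Peterson \cite[Theorem 3.1]{DP22}, which is precisely the statement $\Har(\mcp_{\nu})\cap\Har(\mcp_{\nu}^o)=\mathcal Z(L(\G))$ for a symmetric generating measure; your closing parenthetical correctly identifies this as sufficient, and that is the route you should take. Your primary argument for that clause --- the Fourier coefficients, the vector-valued Poisson representation, the Jensen-equality step forcing $\sigma$-equivariance, and the final $\ell^2$ conjugation-walk argument --- amounts to an attempt to reprove that theorem from scratch, and as written it has genuine gaps: the passage from equality in Jensen's inequality to pointwise equivariance of $\hat w$, the translation of equivariance into $T\in\rho(\G)''$, and the concluding FC-center step are all asserted rather than established. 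None of this is needed once \cite{DP22} is cited.
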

\begin{proof} Write $\mcp_{\nu}(X)= \sum_{g \in \G} \nu(g)\rho_gX\rho_g^*= \nu(e)T+ (1-\nu(e)) \sum_{g \neq e} \frac{\nu(g)}{1-\nu(e)}\rho_gX\rho_g^*$. Let $S(X)= \sum_{g \neq e} \frac{\nu(g)}{1-\nu(e)}\rho_gX\rho_g^*$, and note that $S$ is ucp, and hence a contraction on $\BB(\ell^2(\G))$.\vskip 0.01 in
	Now let $T \in \BB(\ell^2(\G))$ such that $\mathcal P_{\nu}\circ \mcp_{\nu}^{o}(T)=T$.
	Then we have $T= \mcp_{\nu}(\mcp_{\nu}^o(T))= \nu(e)\mcp_{\nu}^o(T) +(1-\nu(e))S\circ \mcp_{\nu}^o(T)$. Note that $S$ and $\mcp_{\nu}^o$ commute, and hence $\mcp_{\nu}^o$ commutes with $S\circ \mcp_{\nu}^o$. Hence by Lemma~\ref{lem:commuting contaction} we get that $\mcp_{\nu}^o(T)=T= S \circ \mcp_{\nu}^o(T)$. \vskip 0.01 in
	
	Then from $\mathcal P_{\nu}\circ \mcp_{\nu}^{o}(T)=T$ and $\mcp_{\nu}^{o}(T)=T$ we get that $\mcp_{\nu}(T)=T$, and hence $T \in \Har(\mcp_{\nu})\cap \Har(\mcp_{\nu}^o)$. The last statement now follows from the double ergodicity theorem \cite[Theorem 3.1]{DP22}.
	\end{proof}
We now provide the characterization of bi-harmonic operators for a symmetric, generating measure (where $e$ need not lie in the support).
\begin{thm} \label{thm:joint bi-harmonic2}
	Let $\mu$ be a symmetric probability measure on $\G$. Let $T \in \BB(\ell^2(\G))$ such that $\mathcal P_{\mu}\circ \mcp_{\mu}^{o}(T)=T$. Then $T$ can be uniquely written as a sum of a (separately) bi-harmonic function, and a (separately) anti-bi-harmonic function for $\mu$. 
\end{thm}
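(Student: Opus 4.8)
The plan is to reduce the problem to the squared measure $\nu = \mu \ast \mu$ and invoke Theorem~\ref{thm:joint bi-harmonic1}. Since $\mu$ is symmetric, $(\mu\ast\mu)(e) = \sum_{h\in\G}\mu(h)\mu(h^{-1}) = \sum_{h\in\G}\mu(h)^2 > 0$, so $e \in \operatorname{supp}(\nu)$. Because the left and right regular representations commute, the maps $\mcp_{\mu}$ and $\mcp_{\mu}^{o}$ commute on $\BB(\ell^2(\G))$, and moreover $\mcp_{\nu} = \mcp_{\mu}^2$ and $\mcp_{\nu}^{o} = (\mcp_{\mu}^{o})^2$. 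Hence from $\mcp_{\mu}\circ\mcp_{\mu}^{o}(T) = T$ one obtains
\[
\mcp_{\nu}\circ\mcp_{\nu}^{o}(T) = \mcp_{\mu}^2 (\mcp_{\mu}^{o})^2 (T) = (\mcp_{\mu}\circ\mcp_{\mu}^{o})^2(T) = T,
\]
so Theorem~\ref{thm:joint bi-harmonic1} applies to $\nu$ and yields $\mcp_{\mu}^2(T) = T$ and $(\mcp_{\mu}^{o})^2(T) = T$.

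The next step is to observe that in fact $\mcp_{\mu}^{o}(T) = \mcp_{\mu}(T)$: applying $\mcp_{\mu}$ to the identity $\mcp_{\mu}\circ\mcp_{\mu}^{o}(T) = T$ and using $\mcp_{\mu}\mcp_{\mu}^{o} = \mcp_{\mu}^{o}\mcp_{\mu}$ together with $\mcp_{\mu}^2(T) = T$ gives $\mcp_{\mu}^{o}(T) = \mcp_{\mu}(T)$. I then set
\[
T_{+} = \tfrac{1}{2}\bigl(T + \mcp_{\mu}(T)\bigr), \qquad T_{-} = \tfrac{1}{2}\bigl(T - \mcp_{\mu}(T)\bigr),
\]
so that $T = T_{+} + T_{-}$. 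A routine computation using only $\mcp_{\mu}^2(T) = T$, $\mcp_{\mu}^{o}(T) = \mcp_{\mu}(T)$ and $\mcp_{\mu}\circ\mcp_{\mu}^{o}(T) = T$ shows that $\mcp_{\mu}(T_{\pm}) = \pm T_{\pm}$ and $\mcp_{\mu}^{o}(T_{\pm}) = \pm T_{\pm}$; that is, $T_{+}$ is (separately) bi-harmonic and $T_{-}$ is (separately) anti-bi-harmonic for $\mu$. Uniqueness is immediate: if $T = S_{+} + S_{-}$ is any decomposition of this shape, then applying $\mcp_{\mu}$ gives $\mcp_{\mu}(T) = S_{+} - S_{-}$, whence $S_{\pm} = \tfrac{1}{2}\bigl(T \pm \mcp_{\mu}(T)\bigr) = T_{\pm}$.

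I do not expect a genuine obstacle here: once $\mcp_{\mu}^2(T) = T$ is known, the decomposition is the usual $\pm 1$ spectral-projection trick for the contraction $\mcp_{\mu}$. The two points that require a little care are (i) checking that the reduction to $\nu = \mu\ast\mu$ is legitimate --- that $e\in\operatorname{supp}(\nu)$ and that the square of the composite operator equals the composite of the squared operators, so that Theorem~\ref{thm:joint bi-harmonic1} is genuinely applicable --- and (ii) the commutation manipulation producing $\mcp_{\mu}^{o}(T) = \mcp_{\mu}(T)$, which is precisely what forces the left and right ``halves'' of the decomposition to coincide. Note that no generation hypothesis on $\mu$ is needed for the decomposition itself; if $\mu$ is additionally generating, then the bi-harmonic summand $T_{+}$ lies in $\mathcal Z(L(\G))$ by the double ergodicity theorem as in Theorem~\ref{thm:joint bi-harmonic1}, while the anti-bi-harmonic summand $T_{-}$ is described by the analysis of anti-harmonic operators carried out in the later sections.
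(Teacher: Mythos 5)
Your proof is correct and follows essentially the same route as the paper's: reduce to $\nu=\mu\ast\mu$ (noting $e\in\operatorname{supp}(\nu)$) so that Theorem~\ref{thm:joint bi-harmonic1} gives $\mcp_{\mu}^2(T)=T$, then split $T$ into the $\pm 1$ eigencomponents $\tfrac{1}{2}(T\pm\mcp_{\mu}(T))$. The only cosmetic difference is that you verify the action of $\mcp_{\mu}^{o}$ on the two summands by direct computation from the identity $\mcp_{\mu}^{o}(T)=\mcp_{\mu}(T)$, whereas the paper uses that $\mcp_{\mu}^{o}$ preserves both eigenspaces together with uniqueness of the decomposition $\Har(\mcp_{\mu}^2)=\Har(\mcp_{\mu})\oplus E_{-1}(\mcp_{\mu})$.
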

\begin{proof}
	Let $\nu= \mu\ast \mu$. Then $e \in \text{Support}(\nu)$. We have $\mcp_{\nu}\circ \mcp_{\nu}^o(T)=(\mcp_{\mu}\circ \mcp_{\mu}^o)^2(T)=T$. By Theorem~\ref{thm:joint bi-harmonic1} we get that $\mcp_{\nu}(T)=\mcp_{\nu}^o(T)=T$, which yields $T \in \Har(\mcp_{\mu}^2) \cap (\Har(\mcp_{\mu}^o)^2)$. \vskip 0.01 in
	Note that we have the following vector space direct sum decomposition $\Har(\mcp_{\mu}^2)= \Har(\mcp_{\mu}) \oplus E_{-1}(\mcp_{\mu})$, where $E_{-1}(\mcp_{\mu})$ is the vector space of all anti-harmonic operators, i.e. $E_{-1}(\mcp_{\mu})=\{T \in \BB(\ell^2(\G)): \mcp_{\mu}(T)=-T  \}$. Indeed, if $\mcp_{\mu}^2(x)=x$, then $x= \frac{1}{2}(x+\mcp_{\mu}(x))+\frac{1}{2}(x-\mcp_{\mu}(x))$ gives a decomposition of $x$ as a sum of a harmonic and an anti-harmonic operator. As $\Har(\mcp_{\mu}) \cap E_{-1}(\mcp_{\mu})=\{0\}$, we get the direct sum decomposition as claimed.
	\vskip 0.01 in
	Hence $T \in \Har(\mcp_{\mu}^2)$ implies $T=T_0+T_1$, with $T_0 \in \Har(\mcp_{\mu})$ and $T_1 \in E_{-1}(\mcp_{\mu})$. Note that $\mcp_{\mu}^o$ preserves both the spaces $\Har(\mcp_{\mu})$ and $E_{-1}(\mcp_{\mu})$. So we get:
	$$ T= \mcp_{\mu}^o(\mcp_{\mu}(T_0 +T_1))=\mcp_{\mu}^o(T_0-T_1)= \mcp_{\mu}^o(T_0)-\mcp^o(T_1). $$
	As $\mcp_{\mu}^o(T_0) \in \Har(\mcp_{\mu})$ and $\mcp^o(T_1) \in E_{-1}(\mcp_{\mu})$, from uniqueness of decomposition we get that $\mcp_{\mu}^o(T_0)=T_0$ and $\mcp_{\mu}^o(T_1)=-T_1$. Thus, $T_0$ is separately bi-harmonic for $\mu$ and $T_1$ is separately anti-bi-harmonic for $\mu$.
\end{proof}
We can now state the answer to Kaimanovich's question from \cite{kaimanovich1}.
\begin{cor} \label{cor:joint bi-harmonic}
Let $\mu$ be a symmetric, generating probability measure on $\G$. Let $f \in \ell^{\infty}(\G)$ be a jointly $\mu$-bi-harmonic function, i.e. $\mu \ast f \ast \mu=f$. Then there exists $c \in \mathbb C$ such that $f-c$ is anti-harmonic under both left and right convolution by $\mu$.	
\end{cor}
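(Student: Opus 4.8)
The plan is to reduce the statement about the bounded function $f$ to Theorem~\ref{thm:joint bi-harmonic2} applied to an operator on $\BB(\ell^2(\G))$, and then to identify which functions on $\G$ arise as the diagonal (Fourier-$\mathcal E$) data of the harmonic and anti-harmonic pieces. First I would observe that a bounded function $f \in \ell^{\infty}(\G)$, viewed as a multiplication operator $M_f \in \BB(\ell^2(\G))$, satisfies $\mcp_{\mu}(M_f) = M_{f \ast \mu}$ and $\mcp_{\mu}^o(M_f) = M_{\mu \ast f}$ (a short computation with the right regular representation, using that conjugation by $\rho_g$ translates a multiplication operator). Hence the hypothesis $\mu \ast f \ast \mu = f$ says precisely $\mcp_{\mu} \circ \mcp_{\mu}^o(M_f) = M_f$. (One should double-check the order of composition here against the conventions in Theorem~\ref{thm:joint bi-harmonic2}; because $\mu$ is symmetric, $\mcp_\mu$ and $\mcp_\mu^o$ play symmetric roles, so $\mcp_\mu \circ \mcp_\mu^o = \mcp_\mu^o \circ \mcp_\mu$ applied to $M_f$ is $M_{\mu \ast f \ast \mu}$ either way.)

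Next I would invoke Theorem~\ref{thm:joint bi-harmonic2}: $M_f$ decomposes uniquely as $T_0 + T_1$ with $T_0$ separately bi-harmonic and $T_1$ separately anti-bi-harmonic for $\mu$. Applying the canonical conditional expectation $\mathcal E : \BB(\ell^2(\G)) \to \ell^\infty(\G)$, which commutes with $\mcp_{\mu}$ and with $\mcp_{\mu}^o$ (as noted in the proof of Proposition~\ref{prop: eigenoperator to eigenfunction}), and noting $\mathcal E(M_f) = f$, we get $f = f_0 + f_1$ where $f_0 := \mathcal E(T_0)$ is separately $\mu$-bi-harmonic as a function ($f_0 \ast \mu = \mu \ast f_0 = f_0$) and $f_1 := \mathcal E(T_1)$ is separately anti-$\mu$-harmonic as a function ($f_1 \ast \mu = \mu \ast f_1 = -f_1$). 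Here I invoke Willis's theorem (quoted in the introduction): a bounded separately $\mu$-bi-harmonic function for a symmetric generating $\mu$ is constant, so $f_0 \equiv c$ for some $c \in \mathbb C$. Then $f - c = f_1$ is anti-harmonic under both left and right convolution, which is exactly the claim.

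The main obstacle I anticipate is purely bookkeeping rather than conceptual: verifying that $M_f \in \BB(\ell^2(\G))$ really does intertwine the noncommutative and classical pictures in the way claimed — i.e. that $\mcp_\mu(M_f) = M_{f\ast\mu}$ with the correct convolution side matching the $\rho_g T \rho_g^*$ convention — and that applying $\mathcal E$ to the operator-level decomposition $T_0 + T_1$ lands inside the multiplication operators and yields genuinely separately (anti-)bi-harmonic \emph{functions}, not merely operators whose diagonals happen to be (anti-)harmonic. The point is that $\mathcal E(T_0)$ and $\mathcal E(T_1)$ are automatically multiplication operators, and the separate bi-harmonicity passes through $\mathcal E$ because $\mathcal E$ commutes with both Markov operators; once that is pinned down the rest is immediate. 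An alternative, slightly more self-contained route avoids $\BB(\ell^2(\G))$ altogether: apply the vector-space splitting $\Har(\mcp_\mu^2) = \Har(\mcp_\mu) \oplus E_{-1}(\mcp_\mu)$ directly in $\ell^\infty(\G)$ (exactly as in the proof of Theorem~\ref{thm:joint bi-harmonic2}, with $\mcp_\mu$ now meaning right convolution on $\ell^\infty(\G)$), using that $\mu \ast f \ast \mu = f$ forces $f \in \Har(\mcp_\mu^2)$ via $\nu = \mu\ast\mu$ and Theorem~\ref{thm:joint bi-harmonic1}, then run the same uniqueness argument to get $f = f_0 + f_1$ with $f_0$ separately bi-harmonic and $f_1$ separately anti-bi-harmonic, and finish with Willis. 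I would present the operator-theoretic version as the main line since it most directly reuses the theorems just proved, and remark on the elementary reformulation.
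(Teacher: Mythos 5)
Your proposal is correct and follows essentially the same route as the paper: pass to the multiplication operator $M_f$, apply Theorem~\ref{thm:joint bi-harmonic2} to split $M_f=T_0+T_1$ (where in fact $T_0=\frac{1}{2}M_{f+f\ast\mu}$ and $T_1=\frac{1}{2}M_{f-f\ast\mu}$ are already multiplication operators, so the detour through $\mathcal E$ is harmless but unnecessary), and then show the separately bi-harmonic part is constant. The only cosmetic difference is that you finish with Willis's theorem on separately bi-harmonic functions while the paper invokes the double ergodicity theorem of \cite{DP22} to get $T_0\in\mathcal Z(L(\G))\cap\ell^{\infty}(\G)=\mathbb C$; as the introduction notes, these two facts are equivalent.
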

\begin{proof}
	Let $M_{f}$ denote the multiplication operator on $\BB(\ell^2(\G))$. From $\mu \ast f \ast \mu=f$ we get $\mcp_{\mu}\circ \mcp_{\mu}^o(M_f)=M_f$. By the proof of Theorem~\ref{thm:joint bi-harmonic2} we get $T_0= \frac{1}{2}(M_{f+f \ast \mu})\in \Har(\mcp_{\mu}) \cap \Har(\mcp_{\mu}^o)$ and $T_1= \frac{1}{2}(M_{f-f \ast \mu}) \in E_{-1}(\mcp_{\mu})\cap E_{-1}(\mcp_{\mu}^o) $ with $M_f=T_0+T_1$. Since $\mu$ is generating, applying the double ergodicity theorem \cite[Theorem 3.1]{DP22}, we get that $T_0=\frac{1}{2}(M_{f+f \ast \mu})\in  \mathcal Z(L(\G))\cap \ell^{\infty}(\G)=\mathbb C$. This proves the result.  
\end{proof}
We now provide two concrete examples where jointly $\mu$-harmonic functions are not constant.  
\begin{examp} \label{eg: anti-harmonic for Z}
	Let $\G= \mathbb Z$. Consider the symmetric, generating measure $\mu$ on $\mathbb Z$ given by $\mu(1)=\mu(-1)= \frac{1}{2}$. Let $f \in \ell^{\infty}(\mathbb Z)$ be given by $f(n)=1$, if $n$ is even, and $f(n)=-1$ if $n$ is odd. Then $f \ast \mu =-f$. Since $\G$ is abelian, $\mu \ast f= f \ast \mu =-f$, and hence $\mu \ast f \ast \mu=f$. 
	 Note that the space of harmonic functions for $\mu$ just consists of constant functions by the classical Choquet-Deny Theorem.  Also, it's easy to check that all anti-harmonic functions are just constant multiples of $f$. Hence the peripheral Poisson boundary is $2$-dimensional. 
\end{examp}
\begin{examp} \label{eg: anti-harmonic for F_2}
	Let $\G= \mathbb F_2= \langle a, b \rangle$. Consider the measure $\mu$ on $\mathbb F_2$ given by $\mu(a)=\mu(b)=\mu(a^{-1})=\mu(b^{-1})$. Let $f \in \ell^{\infty}(\mathbb F_2)$ be given by $f(g)=1$, if $|g|_L$ is even, and $f(g)=-1$, if $|g|_L$ is odd. Then $\mu \ast f=-f$ and $f \ast \mu =-f$. Hence, $\mu \ast f \ast \mu =f$. So, $f$ is a (jointly) $\mu$-harmonic function, which is not constant. Note that $M_f$ is a unitary. 
\end{examp}
Given a ucp map $\Phi: M \rightarrow M$, recall that the multiplicative domain of $\Phi$ consists of all operators $m \in M$ such that $\Phi(m^*m)= \Phi(m)^*\Phi(m)$ and $\Phi(mm^*)=\Phi(m)\Phi(m)^*$. If $m$ lies in the multiplicative domain of $\Phi$, then $\Phi(ma)= \Phi(m) \Phi(a)$ and $\Phi(bm)= \Phi(b) \Phi(m)$ for all $a,b \in M$.
\vskip 0.02 in
In the next Proposition, we describe the space of anti-harmonic eigenvectors, under the assumption that there exists a unitary anti-harmonic eigenvector.
\begin{prop} \label{prop: anti harm description unitary case}
	Let $\mu$ be a symmetric, generating probability measure on $\G$. Suppose there exists a unitary $V \in \BB(\ell^2(\G))$ such that $\mcp_{\mu}(V)=-V$. Then $E_{-1}(\mcp_{\mu})=\{AV: A \in \Har(\mcp_{\mu}) \}$.	
\end{prop}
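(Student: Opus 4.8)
The plan is to exploit the fact that a \emph{unitary} anti-harmonic eigenvector must lie in the multiplicative domain of $\mcp_\mu$, so that left and right multiplication by $V$ (and $V^*$) interacts multiplicatively with $\mcp_\mu$. First I would observe that $\mcp_\mu$ is unital and ucp, hence $\mcp_\mu(V^*V) = \mcp_\mu(I) = I = (-V)^*(-V) = \mcp_\mu(V)^*\mcp_\mu(V)$, and likewise $\mcp_\mu(VV^*) = I = \mcp_\mu(V)\mcp_\mu(V)^*$; therefore $V$ lies in the multiplicative domain of $\mcp_\mu$. Consequently, for every $X \in \BB(\ell^2(\G))$ we have the identities $\mcp_\mu(XV) = \mcp_\mu(X)\mcp_\mu(V) = -\mcp_\mu(X)V$ and, using $V^* = V^{-1}$ also in the multiplicative domain, $\mcp_\mu(XV^*) = -\mcp_\mu(X)V^*$ (one checks $V^*$ is in the multiplicative domain by the same computation, or by noting the multiplicative domain is a $*$-subalgebra).

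With these identities the containment $\supseteq$ is immediate: if $A \in \Har(\mcp_\mu)$, then $\mcp_\mu(AV) = -\mcp_\mu(A)V = -AV$, so $AV \in E_{-1}(\mcp_\mu)$. For the reverse containment, take any $B \in E_{-1}(\mcp_\mu)$ and set $A := BV^*$. Then $\mcp_\mu(A) = \mcp_\mu(BV^*) = -\mcp_\mu(B)V^* = -(-B)V^* = BV^* = A$, so $A \in \Har(\mcp_\mu)$, and $AV = BV^*V = B$. Hence $B = AV$ with $A \in \Har(\mcp_\mu)$, giving $E_{-1}(\mcp_\mu) \subseteq \{AV : A \in \Har(\mcp_\mu)\}$, and the two containments together prove the claimed equality.

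The only genuinely delicate point is the claim that $V^*$ also lies in the multiplicative domain of $\mcp_\mu$ (so that the identity $\mcp_\mu(XV^*) = -\mcp_\mu(X)V^*$ is licensed); this is where I would spend a sentence of care. One clean way: the multiplicative domain of any ucp map is a $*$-subalgebra of $\BB(\ell^2(\G))$ (a standard fact, following from the operator-valued Cauchy–Schwarz inequality for ucp maps), so $V$ being in it forces $V^*$ to be in it as well. Alternatively, one can verify directly that $\mcp_\mu((V^*)^*V^*) = \mcp_\mu(VV^*) = I = (-V^*)^*(-V^*)$ and similarly for $V^*(V^*)^*$, since $\mcp_\mu(V^*) = \overline{\mcp_\mu(V)} $—more precisely $\mcp_\mu(V^*) = \mcp_\mu(V)^* = (-V)^* = -V^*$ because $\mcp_\mu$ is $*$-preserving. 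Either route is short, and once $V^* $ is known to be multiplicative-domain the whole argument is a two-line computation; no appeal to symmetry, generation, or the double ergodicity theorem is actually needed here beyond what guarantees the standing hypotheses make sense. (Symmetry and generation are, of course, what make the hypothesis "there exists such a $V$" nonvacuous in the examples, but they play no role in the proof of the equality itself.)
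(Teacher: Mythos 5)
Your proof is correct and follows essentially the same route as the paper: verify that $V$ (and hence $V^*$) lies in the multiplicative domain of $\mcp_{\mu}$ via $\mcp_{\mu}(V^*V)=I=\mcp_{\mu}(V)^*\mcp_{\mu}(V)$, then obtain both containments by the two-line computations $\mcp_{\mu}(AV)=-AV$ and $\mcp_{\mu}(TV^*)=TV^*$. Your extra care in justifying that $V^*$ also belongs to the multiplicative domain is a reasonable addition the paper leaves implicit, but it does not change the argument.
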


\begin{proof}
We first show that $V$ belongs to the multiplicative domain of $\mcp_{\mu}$. Indeed,  $\mcp_{\mu}(V)^*\mcp_{\mu}(V)=(-V)^*(-V)=I=\mcp_{\mu}(V^*V)$. Let $A \in \Har(\mcp_{\mu})$. Then $\mcp_{\mu}(AV)=\mcp_{\mu}(A)\mcp_{\mu}(V)-\mcp_{\mu}(A)=-A$. Thus, $AV \in E_{-1}(\mcp_{\mu})$. \vskip 0.01 in
Conversely, let $T \in E_{-1}(\mcp_{\mu})$. Let $A= TV^*$. We claim that $A \in \Har(\mcp_{\mu})$. To see this, we calculate $\mcp_{\mu}(TV^*)= \mcp_{\mu}(T)\mcp_{\mu}(V)^*=(-T)(-V)^*= TV^*=A$. So, $T=AV$, with $A \in\Har(\mcp_{\mu})$. 
\end{proof}

In the next proposition we concretely describe all separately anti-bi-harmonic operators in the scenario that we have a unitary anti-bi-harmonic operator. 

\begin{cor}\label{cor:anti biharmonic operator description unitary case}
	Let $\mu$ be a symmetric, generating probability measure on an icc group $\G$. Suppose there exists a unitary $V \in \BB(\ell^2(\G))$ such that $\mcp_{\mu}(V)=-V$, and $\mcp_{\mu}^o(V)=-V$. Let $T \in E_{-1}(\mcp_{\mu})\cap E_{-1}(\mcp_{\mu}^o)$. Then $T=cV$ for some $c \in \mathbb C$.
\end{cor}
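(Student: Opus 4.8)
The plan is to combine Proposition~\ref{prop: anti harm description unitary case} with the double ergodicity theorem \cite[Theorem 3.1]{DP22}. First I would record that, exactly as in the proof of Proposition~\ref{prop: anti harm description unitary case}, the unitary $V$ lies in the multiplicative domain of $\mcp_{\mu}^o$ as well: since $\mcp_{\mu}^o(V)=-V$ is a unitary, $\mcp_{\mu}^o(V)^*\mcp_{\mu}^o(V)=I=\mcp_{\mu}^o(V^*V)$ and $\mcp_{\mu}^o(V)\mcp_{\mu}^o(V)^*=I=\mcp_{\mu}^o(VV^*)$, so $V$ is in the multiplicative domain of $\mcp_{\mu}^o$ (the same computation handles $\mcp_{\mu}$, which is also needed for the appeal to Proposition~\ref{prop: anti harm description unitary case}). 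Consequently, for every $X\in\BB(\ell^2(\G))$ we have $\mcp_{\mu}^o(XV)=\mcp_{\mu}^o(X)\,\mcp_{\mu}^o(V)=-\mcp_{\mu}^o(X)\,V$.

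Next, let $T\in E_{-1}(\mcp_{\mu})\cap E_{-1}(\mcp_{\mu}^o)$. Applying Proposition~\ref{prop: anti harm description unitary case} to $\mcp_{\mu}$, we may write $T=AV$ with $A:=TV^*\in\Har(\mcp_{\mu})$. I would now feed the hypothesis $\mcp_{\mu}^o(T)=-T$ into this factorization: using the multiplicative-domain identity above, $-AV=-T=\mcp_{\mu}^o(T)=\mcp_{\mu}^o(AV)=-\mcp_{\mu}^o(A)\,V$, and cancelling the unitary $V$ on the right yields $\mcp_{\mu}^o(A)=A$. Hence $A\in\Har(\mcp_{\mu})\cap\Har(\mcp_{\mu}^o)$, i.e. $A$ is a separately bi-harmonic operator for $\mu$.

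Finally, since $\mu$ is symmetric and generating, the double ergodicity theorem \cite[Theorem 3.1]{DP22} gives $A\in\mathcal Z(L(\G))$; as $\G$ is icc, $\mathcal Z(L(\G))=\mathbb C I$, so $A=cI$ for some $c\in\mathbb C$ and therefore $T=AV=cV$, as required. The only point requiring care is the bookkeeping of which factor sits in the multiplicative domain: one must apply the property to $V$ (not $A$) inside $\mcp_{\mu}^o(AV)$, so that the factor that gets pulled out intact is exactly the unitary one can later cancel. Once this is observed there is no genuine obstacle, and the argument is short.
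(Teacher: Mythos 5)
Your proof is correct and follows essentially the same route as the paper: factor $T=AV$ via Proposition~\ref{prop: anti harm description unitary case}, use the multiplicative-domain property of $V$ for $\mcp_{\mu}^o$ to deduce $\mcp_{\mu}^o(A)=A$, and conclude $A\in\Har(\mcp_{\mu})\cap\Har(\mcp_{\mu}^o)=\mathcal Z(L(\G))=\mathbb C$ by double ergodicity and the icc assumption. Your explicit verification of the Choi condition $\mcp_{\mu}^o(V)^*\mcp_{\mu}^o(V)=\mcp_{\mu}^o(V^*V)$, which justifies pulling $V$ out on the right, is a welcome bit of extra care that the paper leaves implicit.
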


\begin{proof}
As $T \in E_{-1}(\mcp_{\mu})$, by Proposition~\ref{prop: anti harm description unitary case} $T=AV$ for some $A\in \Har(\mcp_{\mu})$. Note that $V$ lies in the multiplicative domain of both $\mcp_{\mu}$ and $\mcp_{\mu}^o$. Since $T \in E_{-1}(\mcp_{\mu}^o)$, we have
$$-T=-AV=\mcp_{\mu}^o(T)= \mcp_{\mu}^o(AV)= \mcp_{\mu}^o(A)\mcp_{\mu}^o(V)=-\mcp_{\mu}^o(A)V.$$
So, $\mcp_{\mu}^o(A)V=AV$, which implies $\mcp_{\mu}^o(A)=A$. Hence, $A \in \Har(\mcp_{\mu})\cap \Har(\mcp_{\mu}^o)= \mathcal Z(L(\G))=\mathbb C$, by the double ergodicity theorem \cite[Theorem 3.1]{DP22}. 
\end{proof}

\section{Peripheral Eigenvectors}
Throughout this section, we will assume that $\mu$ is a symmetric, generating probability measure on $\G$. By Theorem~\ref{thm: peripheral eigenvalue symmetric case} we know that the only possible peripheral eigenvalues are $\pm 1$. We will assume that there exists a nonzero anti-$\mu$-harmonic function.
 Our goal is to prove that there exists a multiplicative character $\chi$ on $\G$ such that $\chi|_{\text{supp} (\mu)} \equiv -1$. 
 \vskip 0.01 in
 Our first result shows that we can find such an anti-harmonic character, if there exists a nonzero anti-harmonic function, which attains its maxima.
\begin{lem} \label{lem:character from anti harmonic function}
Let $f \in (\ell^{\infty}(\G))_1$ be a real valued anti-harmonic function such that $f(e)=1$, then $f$ is a character on $\G$ with $f|_{\text{supp} (\mu)} \equiv -1.$
\end{lem}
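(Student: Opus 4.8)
The plan is to run a \emph{maximum principle} argument exploiting the hypotheses $\|f\|_\infty \le 1$ and $f(e)=1$, which say that $f$ attains its maximal value exactly at the identity. First I would evaluate the anti-harmonicity relation $f\ast\mu=-f$ at $g=e$, obtaining $\sum_{h}\mu(h)f(h)=-f(e)=-1$. Since $\mu(h)\ge 0$, $\sum_h\mu(h)=1$, and $f(h)\ge -1$ for every $h$, a convex combination of numbers in $[-1,1]$ equals $-1$ only if each term hit with positive weight equals $-1$; hence $f(h)=-1$ for every $h\in\text{supp}(\mu)$, which is already the assertion $f|_{\text{supp}(\mu)}\equiv -1$.

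Next I would bootstrap this local computation to all of $\G$. The same argument applies verbatim at any point $g$ with $|f(g)|=1$: if $f(g)=1$ then $\sum_h\mu(h)f(gh)=-1$ forces $f(gh)=-1$ for all $h\in\text{supp}(\mu)$, and if $f(g)=-1$ then $\sum_h\mu(h)f(gh)=1$ forces $f(gh)=1$; in either case $f(gh)=-f(g)$ whenever $h\in\text{supp}(\mu)$. Because $\mu$ is symmetric and generating, $\text{supp}(\mu)$ generates $\G$ as a semigroup, so every $g\in\G$ can be written $g=h_1\cdots h_n$ with $h_i\in\text{supp}(\mu)$, and an induction on the word length starting from $f(e)=1$ gives $f(h_1\cdots h_k)=(-1)^k$ for each $k$. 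In particular $f$ is $\{\pm1\}$-valued on all of $\G$, and the parity of the length of $g$ is recorded by $f(g)$.

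Finally, to see $f$ is a character I would fix $g_1,g_2\in\G$, write $g_2=h_1\cdots h_n$ with $h_i\in\text{supp}(\mu)$, and apply $f(xh)=-f(x)$ successively at $x=g_1,\,g_1h_1,\dots$ to get $f(g_1g_2)=(-1)^nf(g_1)$; since the previous step gives $f(g_2)=(-1)^n$, this yields $f(g_1g_2)=f(g_1)f(g_2)$. Thus $f\colon\G\to\{\pm1\}\subset\mathbb T$ is a multiplicative character with $f|_{\text{supp}(\mu)}\equiv-1$.

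The argument is short, and the only point demanding a little care is the consistency of the sign $(-1)^n$ across different expressions of a group element as a word in $\text{supp}(\mu)$; this is not an extra hypothesis but a consequence of the recursion $f(xh)=-f(x)$, which pins $f$ down uniquely from the single value $f(e)$. I would also remark on why one cannot shortcut by noting that $f$ is $\mu^{\ast2}$-harmonic and invoking a Choquet--Deny-type rigidity: $\text{supp}(\mu^{\ast2})$ need not generate $\G$ — it does not in Example~\ref{eg: anti-harmonic for Z} — so the hands-on propagation above is genuinely needed.
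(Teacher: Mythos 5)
Your proof is correct and follows essentially the same route as the paper's: evaluate the anti-harmonicity relation at a point where $|f|=1$ to force $f$ to take the opposite extreme value on the $\text{supp}(\mu)$-translates of that point, then propagate by induction along words in $\text{supp}(\mu)$, using that $\text{supp}(\mu)$ generates $\G$ as a semigroup. Your explicit handling of the induction step (applying the maximum principle at every point where $|f|=1$, not just on $\text{supp}(\mu)$) and of multiplicativity for arbitrary pairs $g_1,g_2$ merely fills in details the paper compresses into the word ``inductively,'' so it is the same argument.
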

\begin{proof}
	Let $S$ denote the support of $\mu$. As $f$ is anti-harmonic, we get \begin{equation} \label{eq:antiharmonic at identity}
	-1=-f(e)= (f \ast \mu)(e)= \sum_{h \in S} \mu(h)f(h).
	\end{equation}
	As $-1 \leq f(\g) \leq 1$ for all $\g \in \G$, equation~\ref{eq:antiharmonic at identity} implies that $f(h)=-1$ for all $h \in \text{supp}(\mu)$. Now let $g \in \text{supp}(\mu)$. Using $f(g)=-1$, and anti-harmonicity of $f$ we get
	\begin{equation} \label{eq:antiharmonic at group element}
	1=-f(g)=(f \ast \mu)(g)= \sum_{h \in S} \mu(h)f(gh).
	\end{equation}
	Equation~\ref{eq:antiharmonic at group element} yields $f(gh)=1=f(g)f(h)$ for all $g,h \in \text{supp}(\mu)$. Inductively, we get that $f(h_1h_2\ldots h_n)=f(h_1)f(h_2)\ldots f(h_n)$ for all $h_1,h_2,\ldots ,h_n \in \text{supp}(\mu)$. As $\text{supp}(\mu)$ generates $\G$ as a semigroup (being a symmetric set, this follows from the generating assumption), we get that $f$ is a character on $\G$.
\end{proof}
We will now deduce the existence of an anti-harmonic function satisfying the conditions of Lemma~\ref{lem:character from anti harmonic function}.
Consider the minimal dilation $\theta$ of $\mcp_{\mu}: \ell^{\infty}(\G) \rightarrow \ell^{\infty}(\G)$. The peripheral Poisson boundary can be identified with $\Har(\theta^2)$, and is an abelian von Neumann algebra. Note that the classical Poisson boundary is identified with $\Har(\theta)$, and is a von Neumann subalgebra of $\Har(\theta^2)$. We will now prove a technical result regarding existence of a self-adjoint unitary eigenvector for $\theta$ corresponding to the eigenvalue $-1$.
\begin{prop} \label{prop: self adjoint unitary anti-harmonic}
Let $\mu$ be as above. Then there exists a self adjoint unitary $u \in \Har(\theta^2)$ with $\theta(u)=-u$.
\end{prop}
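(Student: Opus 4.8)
The plan is to produce the self-adjoint unitary eigenvector by combining the hypothesis (existence of a nonzero anti-$\mu$-harmonic function) with an averaging/maximization argument that takes place inside the abelian von Neumann algebra $\Har(\theta^2)$. First I would observe that by Proposition~\ref{prop: eigenoperator to eigenfunction} (or directly from the hypothesis), the eigenspace $E_{-1}(\theta) = \{x \in \Har(\theta^2): \theta(x) = -x\}$ is nonzero. Since $\mu$ is symmetric, the dilation $\theta$ is $*$-preserving, so if $x \in E_{-1}(\theta)$ then $x^* \in E_{-1}(\theta)$, and hence we may choose a nonzero \emph{self-adjoint} element $h \in E_{-1}(\theta)$ (take $x + x^*$ or $i(x - x^*)$, one of which is nonzero). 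Rescaling, assume $\|h\| = 1$.

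Next I would work inside the abelian von Neumann algebra $\mathcal{A} := \Har(\theta^2) \cong L^\infty(X,m)$ for some probability space, identifying $h$ with a real-valued function with $\|h\|_\infty = 1$. The key point is that $\theta$ restricts to an (automorphism-like) order-two symmetry on $\mathcal{A}$: since $\theta^2 = \mathrm{id}$ on $\mathcal{A}$ and $\theta$ is a normal unital $*$-map fixing $\Har(\theta)$ pointwise, $\theta|_{\mathcal{A}}$ is a $*$-automorphism of order $2$ (a ucp map that is an involution on a von Neumann algebra must be a Jordan, hence $*$-automorphism). Then $E_{-1}(\theta)$ is precisely the $-1$-eigenspace of this automorphism, which is an ideal-like piece; concretely, writing $p$ for the spectral projection of $|h|$ for the eigenvalue $1$ (i.e. $p = \mathbbm{1}_{\{|h| = 1\}}$), the relation $\theta(h) = -h$ and $\|h\|=1$ force $\theta(p) = p$ — wait, more carefully: one checks $\theta$ maps the set $\{h = 1\}$ to $\{h = -1\}$ and vice versa, so $p = p_+ + p_-$ with $\theta(p_+) = p_-$, and $u_0 := p_+ - p_-$ is then a self-adjoint partial isometry with $\theta(u_0) = -u_0$. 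The remaining obstacle is to upgrade this partial isometry to a \emph{unitary}, i.e. to arrange $p_+ + p_- = 1$; for that I would iterate or use an exhaustion argument: the orthogonal complement $q := 1 - p_+ - p_-$ is $\theta$-invariant, and by a maximality (Zorn) argument over $\theta$-invariant projections carrying a self-adjoint unitary in the $-1$-eigenspace of the corner, one reduces to showing that on no nonzero $\theta$-invariant corner is the $-1$-eigenspace trivial — but if it were trivial on the complement, that would contradict... actually the cleanest route is: the hypothesis plus symmetry gives a self-adjoint $h$ with $\theta(h) = -h$; applying the continuous functional calculus, $\mathrm{sgn}(h) \in \mathcal{A}$ satisfies $\theta(\mathrm{sgn}(h)) = \mathrm{sgn}(\theta(h)) = \mathrm{sgn}(-h) = -\mathrm{sgn}(h)$, so $v := \mathrm{sgn}(h)$ is a self-adjoint element with $v^2 = $ support projection of $h$, which is $\theta$-invariant. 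Then on the complementary $\theta$-invariant corner I repeat; a transfinite/Zorn argument assembles a global self-adjoint unitary $u$ with $\theta(u) = -u$, \emph{provided} the $-1$-eigenspace is "full" in the sense of having central support $1$.

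The genuine main obstacle, then, is establishing that fullness: that the central support (in $\mathcal{A}$) of $E_{-1}(\theta)$ equals $1$, equivalently that $\theta$ acts as a \emph{free} involution on the measure space $(X,m)$ (no fixed points), so that $L^\infty(X) = L^\infty(X/\theta) \oplus (\text{the free part giving the } -1 \text{ eigenfunctions})$ with the free part having full support. I expect this to follow from the group-theoretic structure: any $\theta$-fixed point in $\mathcal{A}$ lies in $\Har(\theta)$ (since $\theta^2 = \mathrm{id}$ and a fixed point of an involution... no — fixed points of $\theta$ on $\mathcal A$ are exactly $\Har(\theta)$), and a nontrivial $\theta$-fixed \emph{central projection} $z$ in $\mathcal{A}$ would, together with a genuine anti-harmonic function, have to interact — if $h$ is supported under $z$ then $h \in z\mathcal{A}$ with $\theta(h) = -h$, contradicting that $z\mathcal A \cap E_{-1}(\theta)$... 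The resolution I would pursue: show directly that if $\theta$ had a fixed-point part, i.e. a nonzero $\theta$-invariant projection $e$ on which $\theta|_{e\mathcal A e}$ has \emph{only} fixed points, then no anti-harmonic function is supported on $e$; conversely the hypothesis gives an anti-harmonic function whose support projection is automatically in the free part, and then by replacing $\mu$ with $\mu * \mu$ and using Theorem~\ref{thm: eigenvalue for identity in support case}-type arguments one shows the free part already exhausts $1$. If that exhaustion genuinely fails, one still obtains a self-adjoint unitary in a corner $e\mathcal A e$ with $e$ central and $\theta$-invariant; but then $u := (1-e) + u_0$ (with $u_0$ the self-adjoint unitary in the corner) need not satisfy $\theta(u) = -u$ on $(1-e)$. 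To force the eigenvalue equation globally I would instead argue that $e$ can be taken to be $0$: the complementary piece $(1-e)$ would be a $\theta$-fixed central projection supporting no anti-harmonic function, and a separate lemma (using that $\Har(\theta)$ on that corner is all of it, i.e. the walk restricted there is "Liouville for the peripheral spectrum") combined with symmetry of $\mu$ shows $1 - e$ must be $0$ whenever $E_{-1}(\theta) \neq 0$, because $\Har(\theta^2) = \Har(\theta)$ on that corner while the global assumption forces $\Har(\theta^2) \supsetneq \Har(\theta)$ and this strict containment must be "spread out" by ergodicity of the $\G$-action on the Poisson boundary. That last ergodicity input — $\G \actson \Har(\theta)$ has no nontrivial invariant projections, hence no nontrivial $\theta$-and-$\G$-invariant projection can split off — is, I believe, the crux, and I would isolate it as the one nontrivial step; everything else (functional calculus to get $\mathrm{sgn}(h)$, self-adjointness via symmetry of $\mu$, assembling the unitary) is routine.
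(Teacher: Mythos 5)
Your proposal is correct in outline, but it takes a genuinely different route from the paper's. The paper picks a projection $p \in \Har(\theta^2)$ with $\theta(p) \neq p$, splits it via Gelfand duality into mutually orthogonal pieces $p = p_1+p_2$, $\theta(p)=p_1+p_3$ with $p_1$ harmonic and $\theta(p_2)=p_3 \perp p_2$, then runs a maximality argument over projections $q$ with $\theta(q) \leq q^{\perp}$ and invokes the SAT property of $\G \actson \Har(\theta)$ (conjugates of a nonzero harmonic projection tend to $1$ in SOT) to kill the harmonic residue $q^{\perp}-\theta(q)$, ending with $u=2q-1$. You instead apply $\mathrm{sgn}$-functional calculus to a self-adjoint $-1$-eigenvector (legitimate: $\theta$ restricts to a normal order-two $*$-automorphism of the abelian algebra $\Har(\theta^2)$, a multiplicativity the paper also uses tacitly in computing $\theta(x^2)=\theta(x)^2$) and reduce everything to showing that the central support of $E_{-1}(\theta)$ equals $1$. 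That crux, which you only gesture at, does close cleanly and more simply than you fear: for $x \in E_{-1}(\theta)$ one has $x^*x \in \Har(\theta)$, so the central support $s$ of $E_{-1}(\theta)$ is a projection in $\Har(\theta)$; it is $\G$-invariant because the $\G$-action commutes with $\theta$; and $\G$-invariant elements of $\Har(\theta) \cong \Har(\mcp_{\mu}) \subseteq \ell^{\infty}(\G)$ are left-translation-invariant functions on $\G$, hence constants, so $s=1$. Thus your route needs only ergodicity of the boundary action rather than SAT, which is a real economy. Two points to repair in a final write-up: the detour about replacing $\mu$ by $\mu \ast \mu$ is a red herring and should be cut, and the exhaustion step should be made explicit (a maximal orthogonal family of self-adjoint partial isometries $v_i \in E_{-1}(\theta)$ with $v_i^2=e_i \in \Har(\theta)$; if $e:=\sum_i e_i \neq 1$ then $s=1$ yields a new self-adjoint eigenvector supported under $1-e$, contradicting maximality, while normality of $\theta$ gives $\theta\bigl(\sum_i v_i\bigr)=-\sum_i v_i$).
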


\begin{proof}
	Suppose first that the Poisson boundary is trivial. Let $0 \neq x \in \Har(\theta^2)$ with $\theta(x)=-x$, and $x=x^*$. Then, $\theta(x^2)= \theta(x)^2=x^2$. So, $x^2 \in \Har(\theta)=\mathbb C$. After normalization, we may assume that $x$ is a self-adjoint unitary.  \vskip 0.01 in
	We now assume that the Poisson boundary is nontrivial. Let $p \in \text{Proj}(\Har(\theta^2))$ with $\theta(p) \neq p$. We decompose $p=x_1+x_2$ with $\theta(x_1)=x_1$ and $\theta(x_2)=-x_2$. Hence $\theta(p)=x_1-x_2$.
	Note that $x_1$ and $x_2$ are self-adjoint, and $x_1$ s positive. Also, $x_2 \neq 0$, as $\theta(p) \neq p$. \vskip 0.01 in
	 From $p=p^2$ and $\theta(p)^2=\theta(p)$ we get
	 \begin{eqnarray}
	 x_1^2+ 2x_1x_2+x_2^2=x_1+x_2.\\
	  x_1^2- 2x_1x_2+x_2^2=x_1-x_2.
	 \end{eqnarray}
	 The above two equations yield 
	 \begin{eqnarray} \label{eq: x_1 x_2 relation}
	 x_1=x_1^2+x_2^2 \text{ and }
	 x_2=2x_1x_2.
	 \end{eqnarray}
	 As $\Har(\theta^2)$ is an abelian $C^*$-algebra, by Gelfand duality, we can find an extremely disconnected space $X$ such that $C(X) \cong \Har(\theta^2)$. We will denote the image of any element $y \in \Har(\theta^2)$ as $\hat{y} \in C(X)$ under this isomorphism. We will also denote the pre-image of $f \in C(X)$ as $\check{f} \in \Har(\theta^2)$. Now let $f = \hat p$, $f_1 = \hat x_1$ and $f_2= \hat x_2$. From equation~\ref{eq: x_1 x_2 relation} we get that
	 \begin{eqnarray} 
	 f_1=f_1^2+f_2^2 \text{ and } \label{eq: f_1 f_2 relation1} \\
	 f_2=2f_1f_2. \label{eq: f_1 f_2 relation 2}
	 \end{eqnarray}
	 Let $a \in X$ with $f_2(a) \neq 0$. From equation~\ref{eq: f_1 f_2 relation 2} we get $f_1(a)=\frac{1}{2}$. Equation~\ref{eq: f_1 f_2 relation1} then gives $f_2(a)=\pm \frac{1}{2}$. \vskip 0.01 in
	 Now let $b \in X$ with $f_2(b)=0$. Then from equation~\ref{eq: f_1 f_2 relation1}, we get $f_1(b)=f_1(b)^2$, which yields $f_1(b)=0$ or $f_1(b)=1$. \vskip 0.01 in
	 Let $A=\{a \in X: f_2(a) \neq 0 \}$, $A_1=\{a \in X: f_2(a)=\frac{1}{2} \}$ and $A_2=\{a \in X: f_2(a)=-\frac{1}{2} \}= A \setminus A_1$. \vskip 0.01 in
	 Let $B=X \setminus A=\{b \in X: f_2(b)=0 \}$. Let $B_1=\{b \in B: f_1(a)=1 \}$ and $B_2=B \setminus B_1= \{b \in B: f_1(a)=0 \}.$ \vskip 0.01 in
	 Then, $f_2=\frac{1}{2}\chi_{A_1}-\frac{1}{2}\chi_{A_2}$ and $f_1=\frac{1}{2}\chi_{A}+ \chi_{B_1}= \frac{1}{2}\chi_{A_1}+ \frac{1}{2}\chi_{A_2}+ \chi_{B_1}$. Hence, $\hat p= f =f_1+f_2= \chi_{A_1}+ \chi_{B_1}$ and $\widehat{\theta(p)}= f_1-f_2= \chi_{A_2}+ \chi_{B_1}$. Let $p_1= \check{\chi_{B_1}}$,  $p_2= \check{\chi_{B_1}}$, and  $p_3= \check{\chi_{B_3}}$. Note that $p_1$, $p_2$ and $p_3$ are mutually orthogonal projections, with 
	 \begin{equation} \label{eq:p theta(p) in terms of p_1, p_2, p_3}
	 p=p_1 + p_2 \text{ and } \theta(p)=p_1 + p_3. 
	 \end{equation}	 
	 We also note
	 \begin{eqnarray} 
	 p \theta(p)=(p_1+p_2)(p_1+p_3)= p_1, \text{ which implies }
	  \theta(p_1)= \theta(p \theta(p))= \theta(p) \theta^2(p)= \theta(p)p=p_1. \label{eq:harmonicity of p_1}	  
	 \end{eqnarray}
	 Hence, $p_1$ is harmonic.  Also, note that, as $p \neq \theta(p)$ by choice, we have $p_2 \neq 0$. From equation~\ref{eq:p theta(p) in terms of p_1, p_2, p_3} we get that $\theta(p_2)=p_3$, which implies $\theta(p_2) \leq p_2^{\perp}$. \vskip 0.01 in
	 Let $q$ denote the maximal projection such that $\theta(q) \leq q^{\perp}$. By above argument, $q \neq 0$. We will now argue that $\theta(q)= q^{\perp}$. Assume for the sake of contradiction that $p_0= q^{\perp}- \theta(q) \neq 0$. Note that $\theta(p_0)= \theta(1-q)-\theta^2(q)= 1-\theta(q)-q= q^{\perp}-\theta(q)=p_0$. So, $p_0$ is harmonic. Let $q_1 \leq p_0$ be any subprojection. Arguing as above, we can find mutually orthogonal projections $q_{1,1}$, $q_{1,2}$ and $q_{1,3}$ such that
	 \begin{eqnarray}
	 q_1= q_{1,1}+ q_{1,2}\\
	 \theta(q_1)= q_{1,1}+ q_{1,3} \\
	 \theta(q_{1,1})= q_{1,1}, \text{ and } \theta(q_{1,2})= q_{1,3}. 
	 \end{eqnarray}
	 Note that $q_{1,i} \leq q_1 \leq p_0 \leq q^{\perp}$ for $i=1,2,3$. We now compute
	 \begin{align*}
	 (q+q_{1,2}) \theta( (q+q_{1,2}))&=  (q+q_{1,2})(\theta(q)+ q_{1,3})) = q\theta(q)+ q \cdot q_{1,3}+ q_{1,2} \theta(q) + q_{1,2} \cdot q_{1,3} \\&= 0+ 0+ q_{1,2} \theta(q)+0=0,
	 \end{align*}
	 where the last equality follows from $q_{1,2} \theta(q) = q_{1,2}(q^{\perp}-p_0)= q_{1,2}-q_{1,2}$, as $q_{1,2}$ is a subprojection of both $p_0$ and $q^{\perp}$. In conclusion, we have shown that $(q+q_{1,2}) \theta( (q+q_{1,2}))=0$, which implies $q+q_{1,2} \leq (q+q_{1,2})^{\perp}$. This will contradict the maximality of $q$, unless $q_{1,2}=0$. As $q_{1,3}= \theta(q_{1,2})$ we get that $q_{1,3}=0$ as well. Hence, $q_1$ is harmonic. \vskip 0.01 in
	 The above paragraph shows that every subprojection of $p_0$ is harmonic. Hence we must have $\theta(p_0 u_gqu_g^*)= p_0 u_gqu_g^*$ for all $g \in \G$. This then implies
	 \begin{equation} \label{eq: conjugation of q time p_0}
	 p_0u_g(\theta(q)-q) =0 \text{ for all } g \in \G.
	 \end{equation}
	 Multiplying both sides of equation~\ref{eq: conjugation of q time p_0} by $u_g^*$, we get \begin{equation} \label{eq: conjugate of p_0 times theta q minus q}
	  (u_g^* p_0 u_g)(\theta(q)-q) =0 \text{ for all } g \in \G.
	 \end{equation}
	 Now $\G \curvearrowright \Har(\theta)$ is SAT \cite{jaworski1}, and $0 \neq p_0 \in \Har(\theta)$ by assumption. Hence we can find a sequence $\{g_n \}$ such that $u_{g_n}^* p_0 u_{g_n} \rightarrow 1$ in SOT (see \cite{creutzpetersonchar}). This implies via equation~\ref{eq: conjugate of p_0 times theta q minus q} that $\theta(q)=q$. However, $\theta(q) \leq q^{\perp}$, and hence $\theta(q)=q$ would imply $q=0$. This is a contradiction, as we have established earlier that $q \neq 0$. The contradiction arose from the assumption that $p_0 \neq 0$. Hence $p_0=0$, which implies $\theta(q)=q^{\perp}$. \vskip 0.01 in
	 Now let $u=2q-1$. Clearly $u$ is a self-adjoint unitary in $\Har(\theta^2)$. Furthermore, $\theta(u)= 2 \theta(q)-1= 2(1-q)-1=-(2q-1)=-u$. Hence we are done. 
	 
\end{proof}

\begin{prop}\label{prop: extreme point of anti harmonic function}
	Let $\mu$ be as above. Then there exists an anti-$\mu$-harmonic function $f \in (\ell^{\infty}(\G))_1$ such that $\mcp_{\mu}^n(|f|)\nnearrow 1$.
	\end{prop}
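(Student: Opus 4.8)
The plan is to take the self-adjoint unitary $u \in \Har(\theta^2)$ with $\theta(u) = -u$ furnished by Proposition~\ref{prop: self adjoint unitary anti-harmonic}, transport it to $\ell^{\infty}(\G)$ via the canonical Poisson-type transform, and then run a martingale convergence argument along the random walk. Recall that the minimal dilation $\theta$ of the classical Markov operator $\mcp_{\mu}\colon \ell^{\infty}(\G)\to\ell^{\infty}(\G)$ is realized on the path space of the right $\mu$-random walk $X_n = X_0 h_1\cdots h_n$ (with $h_i$ i.i.d.\ $\sim\mu$): $\theta$ is the time-shift $\sigma$, $\Har(\theta^2)$ is the (two-step) tail $\sigma$-algebra, and the canonical conditional expectation $E$ onto the time-zero coordinate is $E(F)(g)=\mathbb{E}_g[F]$, where $\mathbb{E}_g$ denotes expectation for the walk started at $g$; in particular $E\circ\theta^n|_{\ell^{\infty}(\G)}=\mcp_{\mu}^n$ and $E\circ\theta=\mcp_{\mu}\circ E$. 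Under this identification $u$ becomes a $\{\pm1\}$-valued measurable function $U$ on path space with $U\circ\sigma^2=U$ and $U\circ\sigma=-U$. I set $f:=E(u)$, i.e.\ $f(g)=\mathbb{E}_g[U]$. Then $f$ is real-valued with $\|f\|_{\infty}\le 1$ (since $|U|=1$), and $\mcp_{\mu}(f)=E(\theta(u))=-E(u)=-f$, so $f\in(\ell^{\infty}(\G))_1$ is anti-$\mu$-harmonic.

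The main point is that $f$ restricted to the walk is a martingale. Fix $g\in\G$, work under $\mathbb{P}_g$, and put $\mathcal F_n=\sigma(X_0,\dots,X_n)$. Since $U\circ\sigma^{2k}=U$ almost surely and $U\circ\sigma^{2k}$ is $\sigma(X_{2k},X_{2k+1},\dots)$-measurable, the Markov property yields
\[
\mathbb{E}_g[\,U\mid\mathcal F_{2k}\,]=\mathbb{E}_g[\,U\circ\sigma^{2k}\mid\mathcal F_{2k}\,]=\mathbb{E}_{X_{2k}}[U]=f(X_{2k}).
\]
Thus $(f(X_{2k}))_{k\ge0}$ is the bounded martingale $\mathbb{E}_g[U\mid\mathcal F_{2k}]$, so by L\'evy's martingale convergence theorem $f(X_{2k})\to U$ in $L^1(\mathbb{P}_g)$. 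As $|U|=1$ and $\bigl||f(X_{2k})|-1\bigr|\le|f(X_{2k})-U|$, this gives $|f(X_{2k})|\to 1$ in $L^1(\mathbb{P}_g)$, hence
\[
\mcp_{\mu}^{2k}(|f|)(g)=\mathbb{E}_g\bigl[\,|f(X_{2k})|\,\bigr]\longrightarrow 1\qquad\text{for every }g\in\G.
\]

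It remains to upgrade this to the monotone statement. The function $|f|$ is $\mcp_{\mu}$-subharmonic: by anti-harmonicity and the triangle inequality, $\mcp_{\mu}(|f|)(g)=\sum_h\mu(h)|f(gh)|\ge\bigl|\sum_h\mu(h)f(gh)\bigr|=|\mcp_{\mu}(f)(g)|=|f(g)|$. Since moreover $|f|\le 1$ and $\mcp_{\mu}$ is positive and unital, the sequence $\mcp_{\mu}^n(|f|)$ is nondecreasing and bounded above by $1$, hence increases pointwise to some $f_{\infty}\le 1$. But $f_{\infty}\ge\mcp_{\mu}^{2k}(|f|)$ for every $k$, and the right-hand side tends to $1$ by the previous paragraph; therefore $f_{\infty}\equiv 1$, i.e.\ $\mcp_{\mu}^n(|f|)\nnearrow 1$, as required.

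\noindent\textbf{Where the difficulty lies.} The step requiring the most care is the first one: one must correctly identify the minimal dilation of the classical Markov operator $\mcp_{\mu}$ on $\ell^{\infty}(\G)$ with the shift on the path space of the random walk, so that $\Har(\theta^2)$ is the tail $\sigma$-algebra of the two-step walk, $E$ is $\mathbb{E}_g[\,\cdot\,]$, and the self-adjoint unitary of Proposition~\ref{prop: self adjoint unitary anti-harmonic} is a genuine $\{\pm1\}$-valued function $U$ with $U\circ\sigma=-U$, and then justify $\mathbb{E}_g[U\mid\mathcal F_{2k}]=f(X_{2k})$ via the Markov property (together with the elementary Fubini remark that a set negligible for the walk from every starting point is negligible in the dilation, so $U\circ\sigma^{2k}=U$ holds $\mathbb{P}_g$-a.s.\ for each $g$ and all $k$). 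This is classical (cf.\ the dilation picture used in \cite{DP22} and \cite{izumi4}), but the bookkeeping must be handled carefully. Once it is in place, the martingale convergence and the subharmonicity of $|f|$ finish the proof routinely.
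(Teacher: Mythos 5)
Your proof is correct, but it reaches the key convergence $\mcp_{\mu}^n(|f|)\nnearrow 1$ by a genuinely different route than the paper. Both arguments start from the self-adjoint unitary $u\in\Har(\theta^2)$ of Proposition~\ref{prop: self adjoint unitary anti-harmonic} and take $f$ to be its compression to $\ell^{\infty}(\G)$, so that $f$ is real valued, $\|f\|_{\infty}\leq 1$ and $f\ast\mu=-f$. The paper then stays operator-algebraic: it invokes the identification of $\Har(\theta^2)$ with the peripheral Poisson boundary $\tilde{\BB_{\mu}}$ carrying the Choi--Effros-type product (via \cite[Corollary 4.4]{BTK22}), reads off $s$-$\lim_n\mcp_{\mu}^n(f^2)=1$ from $v^2=1$, gets monotonicity of $\mcp_{\mu}^n(f^2)$ from the Kadison--Schwarz inequality, and concludes by squeezing with $f^2\leq|f|\leq 1$. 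You instead realize the minimal dilation concretely on the path space of the walk and obtain $\mcp_{\mu}^{2k}(|f|)(g)=\mathbb{E}_g\bigl[|f(X_{2k})|\bigr]\to\mathbb{E}_g[|U|]=1$ by L\'evy's martingale convergence theorem, then upgrade to monotonicity of the full sequence from the elementary subharmonicity $\mcp_{\mu}(|f|)\geq|\mcp_{\mu}(f)|=|f|$. The two are morally the same fact --- asymptotic multiplicativity of the compression on $\Har(\theta^2)$ is exactly the martingale convergence statement --- but yours is probabilistically self-contained and avoids citing the BTK product structure, at the cost of having to set up carefully the path-space realization of Bhat's minimal dilation (which the paper also uses implicitly, e.g.\ when identifying $\Har(\theta)$ with the Poisson boundary and invoking the SAT property). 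Your bookkeeping there, namely $E\circ\theta=\mcp_{\mu}\circ E$, the Markov-property identity $\mathbb{E}_g[U\mid\mathcal F_{2k}]=f(X_{2k})$, and the null-set remark guaranteeing $U\circ\sigma^{2k}=U$ holds $\mathbb{P}_g$-a.s.\ for every $g$, is correct, and the resulting $f$ is the same function the paper produces.
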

\begin{proof}
	By Proposition~\ref{prop: self adjoint unitary anti-harmonic} we can find a self-adjoint unitary $v \in \Har(\theta^2)$, with $v^2=1$. Let $E_{-1}(\mu)$ denote the eigenspace corresponding to the eigenvalue $-1$, and $E_1$ be the eigenspace corresponding to the eigenvalue $1$ (i.e. the space of harmonic functions). Bhat, Talwar, and Kar showed that $\tilde{\BB_{\mu}}:=\overline{\text{span}}\{f \in \ell^{\infty}(\G) \text{ with } f \ast \mu =\pm f \}$ is a $C^*$-algebra under a new Choi-Effros type product defined as follows: \vskip 0.02 in	
	If $f_1, \in E_{\lambda_1}(\mu)$, and $f_2 \in E_{\lambda_2}(\mu)$ where $\lambda_1, \lambda_2 \in \{\pm1 \} $, then $f_1 \diamond f_2= s-\lim (\lambda_1 \lambda_2)^{-n} \mcp_{\mu}^n(f_1 f_2)$. Note that $\tilde{\BB_{\mu}}$ is an abelian $C^*$-algebra under this product. \vskip 0.02 in	
	 In fact, by \cite[Corollary 4.4]{BTK22} we get that $\tilde{\BB_{\mu}}$ is a von Neumann algebra under this product. Moreover, note that $\tilde{\BB_{\mu}}$ is exactly $\Har(\theta^2)$. Let $f \in \ell^{\infty}(\G)$ be a real valued anti-$\mu$-harmonic function such that $f$ corresponds to the self-adjoint unitary $v$ under the above identification.
	 \vskip 0.02 in	
	 So, we have that $s-\lim \mcp_{\mu}^n(f^2)=1$. Note that $f^2= \mcp_{\mu}(f)^*\mcp_{\mu}(f) \leq \mcp_{\mu}(f^2)$ by Kadison-Schwarz inequality. An easy induction argument now implies that the sequence $\{\mcp_{\mu}^n(f^2) \}$ is increasing. So, $\mcp_{\mu}^n(f^2) \nnearrow 1$. \vskip 0.02 in
	 Note that $\mcp_{\mu}(|f|) \geq |\mcp_{\mu}(f)|=|-f|=|f|$. By induction, we get $\{\mcp_{\mu}^n(|f|) \}$ is an increasing sequence. Also, as $\|f\|_{\infty} \leq 1$, we have $f^2 \leq |f|$, which implies that $ \mcp_{\mu}^n(f^2) \leq \mcp_{\mu}^n(|f|) \leq 1$. Hence we get $\mcp_{\mu}^n(|f|)\nnearrow 1$.
\end{proof}
\begin{thm} \label{thm: anti-harmonic character}
	Let $f \in (\ell^{\infty}(\G))_1$ be a real valued anti-$\mu$-harmonic function such that $\mcp_{\mu}^n(|f|)\nnearrow 1$. Then there exists an anti-$\mu$-harmonic character $\chi \in \ell^{\infty}(\G)$, with $\chi|_{\text{supp}(\mu)}\equiv -1$.
\end{thm}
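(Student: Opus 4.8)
The plan is to reduce everything to Lemma~\ref{lem:character from anti harmonic function} by a translation-and-compactness argument: I will produce a sequence of group elements along which the left translates of $f$ converge pointwise to a real-valued anti-$\mu$-harmonic function $\tilde f \in (\ell^\infty(\G))_1$ with $\tilde f(e)=1$, and then Lemma~\ref{lem:character from anti harmonic function} immediately yields that $\tilde f$ is a character with $\tilde f|_{\text{supp}(\mu)}\equiv -1$; one then takes $\chi := \tilde f$.

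First I would use the hypothesis to locate points where $|f|$ is nearly $1$. Since $\mcp_\mu^n(|f|)\nnearrow 1$, in particular $\mcp_\mu^n(|f|)(e)=\sum_{h}\mu^{\ast n}(h)\,|f(h)|\to 1$, and as $|f|\le 1$ this forces the existence of $g_n\in\G$ with $|f(g_n)|\to 1$. Passing to a subsequence and using that $f$ is real-valued, either $f(g_n)\to 1$ or $f(g_n)\to -1$; replacing $f$ by $-f$ if necessary (this changes neither $|f|$, nor the hypotheses, nor the conclusion) we may assume $f(g_n)\to 1$. Now define $f_n\in(\ell^\infty(\G))_1$ by $f_n(g)=f(g_n g)$. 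Because right convolution by $\mu$ and left translation by $g_n$ commute, each $f_n$ is again anti-$\mu$-harmonic: $(f_n\ast\mu)(g)=\sum_h\mu(h)f(g_n g h)=(f\ast\mu)(g_n g)=-f(g_n g)=-f_n(g)$, and moreover $f_n(e)=f(g_n)\to 1$.

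Next, by weak-$*$ compactness of the unit ball of $\ell^\infty(\G)=(\ell^1(\G))^*$ (or, since $\G$ is countable, a diagonal argument over $\G$), pass to a subnet with $f_n\to\tilde f$ in the weak-$*$ topology; testing against $\delta_g\in\ell^1(\G)$ shows $f_n(g)\to\tilde f(g)$ for every $g\in\G$. Then $\tilde f$ is real-valued, $\|\tilde f\|_\infty\le 1$, and $\tilde f(e)=\lim f_n(e)=1$. To see $\tilde f\ast\mu=-\tilde f$, test against $\delta_g\ast\mu\in\ell^1(\G)$ (equivalently, pass the pointwise limit through the absolutely convergent sum $\sum_h\mu(h)f_n(gh)$ by dominated convergence): $(\tilde f\ast\mu)(g)=\lim_n(f_n\ast\mu)(g)=\lim_n\bigl(-f_n(g)\bigr)=-\tilde f(g)$. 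Thus $\tilde f\in(\ell^\infty(\G))_1$ is a real-valued anti-$\mu$-harmonic function with $\tilde f(e)=1$, and Lemma~\ref{lem:character from anti harmonic function} shows $\tilde f$ is a character on $\G$ with $\tilde f|_{\text{supp}(\mu)}\equiv -1$.

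The only point requiring any care is the interchange of limit and summation in the verification that $\tilde f$ is anti-$\mu$-harmonic, but this is immediate from dominated convergence since $\mu$ is a probability measure and $\|f_n\|_\infty\le 1$; all remaining steps are routine. Conceptually, the role of the hypothesis $\mcp_\mu^n(|f|)\nnearrow 1$ is exactly to guarantee that the supremum of $|f|$ equals $1$ and is asymptotically attained, so that the translated limit $\tilde f$ is normalized with $\tilde f(e)=1$ rather than degenerating to $0$ — which is precisely the input needed to invoke Lemma~\ref{lem:character from anti harmonic function}.
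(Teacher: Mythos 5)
Your proposal is correct and follows essentially the same route as the paper: use $\mcp_\mu^n(|f|)(e)\to 1$ to find group elements where $|f|$ is nearly $1$, left-translate $f$ by these elements to get anti-harmonic functions nearly normalized at $e$, extract a pointwise limit, and feed the limit into Lemma~\ref{lem:character from anti harmonic function}. The only (harmless) difference is that you choose all the translating elements directly from the original hypothesis on $f$, whereas the paper builds the sequence iteratively by re-applying the argument to each translate; your version is a slight streamlining of the same idea.
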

\begin{proof}
	By assumption we have \begin{equation} \label{eq: limit of absolute value of anti harmomic extreme point}
	1= s-\lim \mcp_{\mu}^n(|f|)(e)= \sum_{g \in \G}\mu^{\ast n}(g) |f(g)|.  
	\end{equation}
Let $\varepsilon_1=\frac{1}{2}$. As $\mu^{\ast n}$ is a probability measure, from equation~\ref{eq: limit of absolute value of anti harmomic extreme point}, there exists $n_1 \in \mathbb N$ and $g_1 \in \text{supp}(\mu^{\ast n_1})$ such that $|f|(g_1) > 1 -\varepsilon_1$. Replacing $f$ by $-f$ if necessary, we may assume $f(g_1)>1 -\varepsilon_1$. Consider $f_1(x)=f(g_1x)$ for all $x \in \G$. Then $f_1(e)> 1 -\varepsilon_1$. Also, $(f_1 \ast \mu)(x)=\sum_{h \in \G} \mu(h)f_1(xh)= \sum_{h \in \G} \mu(h)f(g_1xh)=-f(g_1x)=-f_1(x)$. So, $f_1$ is anti-harmonic. Also, 
\begin{equation} \label{eq: iteration step 1}
\mcp_{\mu}^n(|f_1|)(x)= \sum_{h \in \G}\mu^{\ast n}(h) |f(g_1xh)| \nnearrow 1,
\end{equation} 
where the limit is in the strong-operator topology.  Let $\varepsilon_2=\frac{1}{3}$. Using the same argument, we deduce the existence of an anti-harmonic function $f_2$ with $f_2(e)> 1- \varepsilon_2$, and $\mcp_{\mu}^n(|f_2|)(x) \nnearrow 1$ for all $x \in \G$. Inductively, we get a sequence of real valued functions $f_k \in \ell^{\infty}(\G)$ satisfying the following properties:
\begin{enumerate}
	\item[i)] $\|f_k\|_{\infty} \leq 1$.
	\item[ii)] $f_k \ast \mu =-f_k$.
	\item[iii)] $f_k(e) > 1-\varepsilon_k$, where $\varepsilon_k= \frac{1}{k+1}$.
	\item[iv)] $\mcp_{\mu}^n(|f_k|)(x) \nnearrow 1$ for all $x \in \G$, where the limit is in the strong-operator topology.
\end{enumerate}
Let $f_{k_j}$ be a subsequence of $f_k$ that converges pointwise and let $\chi \in \ell^{\infty}(\G)$ be the pointwise limit of this subsequence. Hence, $\|\chi\|_{\infty} \leq 1$, and $\chi(e)= \lim_{j \rightarrow \infty} f_{k_j}(e) =1$. Also, we have 
\begin{equation}\label{eq: anti harmonicity of chi}
\langle \mcp_{\mu}(M_{\chi}) \delta_g, \delta_h \rangle = \lim \langle\mcp_{\mu}( M_{f_{k_j}}) \delta_g, \delta_h \rangle =-\lim \langle M_{f_{k_j}} \delta_g, \delta_h \rangle= -\langle M_{\chi} \delta_g, \delta_h \rangle.
\end{equation}
Hence $\mcp_{\mu}(M_{\chi})=-M_{\chi}$ which implies $\chi \ast \mu = -\chi$. As $\chi(e)=1$, by Lemma~\ref{lem:character from anti harmonic function}, we get that $\chi$ is a character on $\G$ with $\chi|_{\text{supp}(\mu)}\equiv -1$.
\end{proof}
We end this section with the following easy observation.
\begin{cor}
	Let $\G$ be a countable, discrete group with no subgroup of index $2$. Then there does not exist a nonzero anti-harmonic function for any symmetric, generating measure $\mu$ on $\G$. Hence, for any symmetric, generating measure $\mu$ on $\G$, every jointly bi-harmonic function is constant.
\end{cor}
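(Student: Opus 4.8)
The plan is to argue by contradiction: assuming a nonzero anti-$\mu$-harmonic function exists, I will build a surjective homomorphism from $\G$ onto $\mathbb Z/2\mathbb Z$, whose kernel is then a subgroup of index $2$, contradicting the hypothesis. Suppose then that some nonzero $g \in \ell^{\infty}(\G)$ satisfies $g \ast \mu = -g$. The first step is a reduction to the real-valued case: since $\mu$ is a real probability measure, $\overline{g} \ast \mu = -\overline{g}$ as well, so $\operatorname{Re} g$ and $\operatorname{Im} g$ are real-valued anti-$\mu$-harmonic functions, at least one of which is nonzero. Hence I may assume a nonzero real-valued anti-$\mu$-harmonic function exists, which places us in the standing setting of this section.

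The second step is to feed this into the machinery already in place. Proposition~\ref{prop: extreme point of anti harmonic function} produces a real-valued anti-$\mu$-harmonic function $f \in (\ell^{\infty}(\G))_1$ with $\mcp_{\mu}^{n}(|f|)$ increasing to $1$, and Theorem~\ref{thm: anti-harmonic character} then yields a multiplicative character $\chi \in \ell^{\infty}(\G)$ which is anti-$\mu$-harmonic and satisfies $\chi|_{\text{supp}(\mu)} \equiv -1$; equivalently, one may invoke Theorem~C directly at this point.

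The third step extracts the contradiction. Since $\mu$ is symmetric and generating, $\text{supp}(\mu)$ generates $\G$ as a semigroup, so every $g \in \G$ can be written as a product $h_1 \cdots h_n$ of elements of $\text{supp}(\mu)$; applying the homomorphism $\chi$ gives $\chi(g) = \prod_{i=1}^{n} \chi(h_i) = (-1)^{n}$. Thus $\chi$ takes values in $\{\pm 1\}$, i.e.\ it is a group homomorphism $\G \to \mathbb Z/2\mathbb Z$, and it is nontrivial because $\text{supp}(\mu)$ is nonempty and $\chi \equiv -1$ on it. Therefore $\ker \chi$ is a subgroup of $\G$ of index $2$, contradicting the hypothesis; hence no nonzero anti-$\mu$-harmonic function exists. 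For the final assertion, I would take a jointly $\mu$-bi-harmonic $f \in \ell^{\infty}(\G)$ and apply Corollary~\ref{cor:joint bi-harmonic} to obtain $c \in \mathbb C$ with $f - c$ anti-harmonic under both left and right convolution by $\mu$, so in particular $(f-c) \ast \mu = -(f-c)$; by the first part this forces $f - c = 0$, i.e.\ $f$ is constant.

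The analytic heart of the argument was already carried out in the preceding results of this section, so there is no genuinely hard step; the only points needing a little care are the passage from a complex to a real anti-harmonic function and the observation that the character furnished by Theorem~\ref{thm: anti-harmonic character} has image exactly $\{\pm1\}$ rather than a larger subgroup of $\mathbb T$ — this is precisely what converts it into an index-two subgroup.
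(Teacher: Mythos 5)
Your proof is correct and follows essentially the same route as the paper: the paper's own argument simply invokes Theorem~\ref{thm: anti-harmonic character} to produce a surjective homomorphism onto $\mathbb Z_2$ and takes its kernel. You merely spell out the intermediate steps (the reduction to a real-valued anti-harmonic function, the fact that the character lands in $\{\pm 1\}$, and the use of Corollary~\ref{cor:joint bi-harmonic} for the final assertion), all of which are accurate.
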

\begin{proof}
If $\G$ admits a nonzero anti-harmonic function, then $\G$ must admit a surjective homomorhism $\chi$ onto $\mathbb Z_2$ by Theorem~\ref{thm: anti-harmonic character}. Then Ker$(\chi)$ is an index $2$ subgroup of $\G$, which establishes the result.
\end{proof}

\section*{Acknowledgments}  
The author is indebted to Prof.\ Jesse Peterson for many valuable comments and suggestions regarding this paper, and for his help and support. The author is very grateful to Prof.\ Ionu\c t Chifan, Prof.\ Raul Curto, Prof.\ Palle Jorgensen, and Prof.\ Paul Muhly for their support and encouragement. Part of this work was done while the author was visiting Vanderbilt University, and The University of Iowa. The author is very thankful to these institutions for their hospitality.

\def\cprime{$'$}
\providecommand{\bysame}{\leavevmode\hbox to3em{\hrulefill}\thinspace}
\providecommand{\MR}{\relax\ifhmode\unskip\space\fi MR }
\providecommand{\MRhref}[2]{%
  \href{http://www.ams.org/mathscinet-getitem?mr=#1}{#2}
}
\providecommand{\href}[2]{#2}

\vskip 0.01 in
\noindent \textsc{Department of Mathematics, Embry-Riddle Aeronautical University, 3700 Willow Creek Road, Prescott, AZ 86301, USA. }

\noindent \email {sayan.das@erau.edu}
\end{document}